%%%%%%%%%%%%%%%%%%%%%%%%%%%%%%%%%%%%%%%%%%%%%%%%%%%%%%%%%%%%%%%%%%%%%%%%%%%%
%% Author template for Stochastic Systems (stsy) [interim solution; new styles under construction]
%% Mirko Janc, Ph.D., INFORMS, mirko.janc@informs.org
%% ver. 0.92, August 2017
%%%%%%%%%%%%%%%%%%%%%%%%%%%%%%%%%%%%%%%%%%%%%%%%%%%%%%%%%%%%%%%%%%%%%%%%%%%%
\documentclass[a4paper]{amsart}

%\OneAndAHalfSpacedXI
%%\OneAndAHalfSpacedXII % Current default line spacing
%%\DoubleSpacedXII
%%\DoubleSpacedXI

% Private macros here (check that there is no clash with the style)
\usepackage{pgf,tikz}
\usepackage{amssymb}
\usepackage{amsmath}
\usepackage{amsthm}
\usepackage{graphicx}
\usepackage{fancyhdr}
\usepackage{bbm}
\usepackage{multirow}

% Natbib setup for author-year style
%\usepackage{natbib}
\usepackage{caption}
\usepackage{subcaption}
\usepackage{bbm, dsfont}
\usepackage{stmaryrd}
\usepackage{algorithm}
\usepackage{algorithmic} %\bibpunct[, ]{(}{)}{,}{a}{}{,}%
\usepackage{mathrsfs}
 %
 %
 %
 %
 %
%\newcommand{\be}{ \begin{equation}}
%\newcommand{\ee}{\end{equation}}

% init distrib following a law
% nombre des tirages de moyennes

%NOTE WORK THE BIBLIO!!!!!!

\def\P{{\mathbb P}}

\def\R{{\mathbb R}}

\def\N{{\mathbb N}}

\def\T{{\mathcal{T}}}

\def\barmun{\bar {\mu}^n}
\def\barmunl{\bar {\mu}^{n,+}}
\def\barmunr{\bar {\mu}^{n,-}}

\def\barmu{\bar\mu}
\def\barmul{\bar\mu^{+}}
\def\barmur{\bar\mu^{-}}

\def\T2a{{\tau_{2\alpha^{+}^{+}}}}
\def\t2a{{t_{2\alpha^{+}^{+}}}}
\def\d {\text{ d}}
% \def\t2e{{t_{2\epsilon}}}
%
%Newcommands de Pascal
%

\newcommand\cro[1]{\left\langle #1 \right\rangle}

\newcommand\procn[1]{\left(#1,\,t \in \llbracket 0,n \rrbracket\right)}
\newcommand\procu[1]{\left(#1,\,t \in [ 0,1 ]\right)}

\def\\xi{{\cal{\xi}}}

\def\M{{\mathcal M}}

\def\({{\Bigl(}}
\def\){{\Bigr)}}
\newcommand\pr[1]{{\mathbb P}\left[#1\right]}
\newcommand\esp[1]{{\mathbb E}\left[#1\right]}

{\bf}{\it}
{\bf}{\it}
\newtheorem{theorem}{Theorem}
\newtheorem{definition}{Definition}

\newtheorem{remark}{Remark}
\newtheorem{proposition}{Proposition}

%% Setup of theorem styles. Outcomment only one.
%% Preferred default is the first option.
%\TheoremsNumberedThrough     % Preferred (Theorem 1, Lemma 1, Theorem 2)
%\TheoremsNumberedByChapter  % (Theorem 1.1, Lema 1.1, Theorem 1.2)
%\ECRepeatTheorems

%% Setup of the equation numbering system. Outcomment only one.
%% Preferred default is the first option.
%\EquationsNumberedThrough    % Default: (1), (2), ...
%\EquationsNumberedBySection % (1.1), (1.2), ...

% For new submissions, leave this number blank.
% For revisions, input the manuscript number assigned by the on-line
% system along with a suffix ".Rx" where x is the revision number.
%\MANUSCRIPTNO{MS-0001-1922.65}

%%%%%%%%%%%%%%%%
\begin{document}
%\setcitestyle{square}
%%%%%%%%%%%%%%%%

% Outcomment only when entries are known. Otherwise leave as is and
%   default values will be used.
%\setcounter{page}{1}
%\VOLUME{00}%
%\NO{0}%
%\MONTH{Xxxxx}% (month or a similar seasonal id)
%\YEAR{0000}% e.g., 2005
%\FIRSTPAGE{000}%
%\LASTPAGE{000}%
%\SHORTYEAR{00}% shortened year (two-digit)
%\ISSUE{0000} %
%\LONGFIRSTPAGE{0001} %
%\DOI{10.1287/xxxx.0000.0000}%

% Author's names for the running heads
% Sample depending on the number of authors;
% \RUNAUTHOR{Jones}
% \RUNAUTHOR{Jones and Wilson}
% \RUNAUTHOR{Jones, Miller, and Wilson}
% \RUNAUTHOR{Jones et al.} % for four or more authors
% Enter authors following the given pattern:
%\RUNAUTHOR{}

% Title or shortened title suitable for running heads. Sample:
% \RUNTITLE{Bundling Information Goods of Decreasing Value}
% Enter the (shortened) title:
%\runtitle{Online matching algorithms}

% Full title. Sample:
% \TITLE{Bundling Information Goods of Decreasing Value}
% Enter the full title:
%\title{Markovian online matching algorithms on large bipartite random graphs}
\title[Markovian online bipartite matching]{Markovian online matching algorithms on large bipartite random graphs}
	%\title{The stochastic matching model on Multigraph between his Restricted and Extended graph}
	\author[Diallo Aoudi, Moyal and Robin]{Mohamed Habib Aliou Diallo Aoudi, Pascal Moyal and Vincent Robin}

% Block of authors and their affiliations starts here:
% NOTE: Authors with same affiliation, if the order of authors allows,
%   should be entered in ONE field, separated by a comma.
%   \EMAIL field can be repeated if more than one author
%} % end of the block

\begin{abstract}
In this paper, we present an approximation of the matching coverage on large bipartite graphs, for {\em local} online matching algorithms based on the sole knowledge of the remaining degree of the nodes of the graph at hand. This approximation is obtained by applying the Differential Equation Method to a measure-valued process representing an alternative construction, in which the matching and the graph are constructed simultaneously, by a uniform pairing leading to a realization of the bipartite Configuration Model. The latter auxiliary construction is shown to be equivalent in distribution to the original one. It allows to drastically reduce the complexity of the problem, in that the resulting matching coverage can be written as a simple function of the final value of the process, and in turn, approximated 
      by a simple function of the solution of a system of ODE's. By way of simulations, we illustrate the accuracy of our estimate, and then compare the performance of an algorithm based on the minimal residual degree of the nodes, to the classical greedy matching. 
      \end{abstract}

% Enter your abstract
%}%

% Sample
%\KEYWORDS{deterministic inventory theory; infinite linear programming duality;
%  existence of optimal policies; semi-Markov decision process; cyclic schedule}

% Fill in data. If unknown, outcomment the field
%\KEYWORDS{Random graphs, Matching, Online algorithms, Stochastic processes}
\maketitle
%%%%%%%%%%%%%%%%%%%%%%%%%%%%%%%%%%%%%%%%%%%%%%%%%%%%%%%%%%%%%%%%%%%%%%

% Samples of sectioning (and labeling) in MNSC
% NOTE: (1) \section and \subsection do NOT end with a period
%       (2) \subsubsection and lower need end punctuation
%       (3) capitalization is as shown (title style).
%
%\section{Introduction.}\label{intro} %%1.
%\subsection{Duality and the Classical EOQ Problem.}\label{class-EOQ} %% 1.1.
%\subsection{Outline.}\label{outline1} %% 1.2.
%\subsubsection{Cyclic Schedules for the General Deterministic SMDP.}
%  \label{cyclic-schedules} %% 1.2.1
%\section{Problem Description.}\label{problemdescription} %% 2.

% Text of your paper here

\section{Introduction}
\label{sec:intro} 

Maximal matching on graphs is a classical problem in graph theory. Its applications range from simple scheduling to online advertisement, from jobs or housing allocations to the sequencing of protein chains, and so on. 
Long after the first polynomial optimal solution, proposed by Edmonds \cite{Blossom}, this problem has still received a significant attention, especially over the last three decades, due to the rise of large networks, leading to a flourish of 
new applications (in e-commerce, for example), forcing a reexamination of the tools previously used. 
As a consequence, two main new approaches emerged.

In 1990, Karp et al. \cite{Karp} introduced the concept of \emph{Online bipartite matching}. In this paradigm, the graph is split into \emph{customers} and \textit{items}. Customers arrive one by one, and it is the role of the matching algorithm to pick one of the items for a match. These assumptions lead to a natural tradeoff between optimality in the matching size and cheaper costs. In \cite{Karp}, Karp et al. used the \emph{adversarial} order of arrivals as a worst case scenario  for the matching completion, leading to the now classical $1-1/e$ bound for the {\em matching coverage}, namely, the  ratio of matched items out of the total set. This ratio is achieved whenever the nodes are drawn repeatedly from a known distribution, and the matching algorithm is greedy - see below. The construction of algorithms that are able to beat this benchmark under various conditions, 
      has then been the subject of an important line of research, see e.g.  \cite{goel2008online,feldman2009online} and references therein. More recently, various extensions of the online bipartite matching problem have been proposed among which, stochastic matching \cite{StoMatch} (meaning that each edge emanating from the online nodes exists with a given probability), random customer arrivals \cite{RandArr}, or models with patience times \cite{Pati}. Bounds for matching algorithms on (general) random regular graphs of fixed girth are given in 
      \cite{gamarnik2010randomized}, using a randomized version of the greedy matching algorithm.

The second approach is more typically computer-scientific. \emph{Streaming algorithms} are algorithms that only process a {stream}, on a limited part of a given graph (for memory constraints for example). They arise as a natural solution while dealing with large data sets and several authors have worked to apply them to the matching problem (see e.g. \cite{chenstre}, \cite{semistream} and \cite{deterstream}, and references therein). This line of research mainly addressed {the type of input} for the stream (either dynamic or enter only), its {randomness} or its {relative size} to the whole graph.

Most references above provide bounds for the performance of given algorithms, that are supported by combinatorial arguments.
However, precisely assessing the performance of a given matching algorithm as the size of the graphs gets large, is 
a challenging problem. Indeed, the analysis of the asymptotic of a sequence of graphs, and matchings on those graphs becomes less and less tractable as their size grows large. 
It can then be preferable to {\em approximate}  the matching size as a function of a scalable statistic of the graph.  
Following this line of thought, in this paper we propose an alternative approach to the bipartite matching problem: 
assessing the performance of given matching algorithms through a deterministic {approximation} of a stochastic process representing the matching procedure, in the large graph asymptotic. The general idea is as follows: 
rather than precisely defining and keeping track of the whole geometry of the graph 
(an information that is in general unavailable for large graphs), we generate a graph from its degree distribution. To this end, we use a classical uniform pairing procedure of the {\em half-edges} of the nodes, see a precise definition below. This construction leads to the so-called Configuration model, namely, a uniformly drawn realization of a graph having the prescribed degree distribution. 
See \cite{bollobas2001random} and \cite{RDH} for the main properties of configuration models, and \cite{OlvChen} 
for the extension to the case of bipartite and oriented graphs. 
%Intuitively, the resulting graph then represents a {\em typical} graph having that degree distribution. 
In parallel, we construct simultaneously an online matching 
(meaning in this context, that each edge that is added to the matching cannot be erased afterwards) on the resulting graph. 
As a main interest, this simultaneous construction leads to a simple Markov representation, keeping track of the {\em remaining degrees} of the nodes that are not yet fully attached to the graph (a definition that will be made precise hereafter), provided that the matching algorithm depends only on these remaining degrees. We say in that case that the matching algorithm is {\em local}. 
The underlying Markov process is then measure-valued, where each measure is a sum of Dirac masses 
marking the remaining degrees of the nodes. By doing so, we do not need to keep track of the precise form of the constructed graph, and in fact, we do not need to have access to it. Then, using usual approximation tools for Markov processes, one can identify the approximation of the considered process as the solution of an ordinary differential equation. This results in a generalization, for measure-valued processes, of the celebrated {\em Differential Equation Method}, see Wormald \cite{WCours}. By doing so, we retrieve an estimate of the resulting matching coverage as a simple function of the latter (deterministic) solution, without knowledge of the precise geometry of the graph at hand. 
Remarkably, we show hereafter that the resulting matching coverage has the same distribution as the one obtained when applying the corresponding online, and local, matching algorithm on a previously constructed bipartite graph $\tilde G$, conditional on the fact that the resulting graph constructed by the CM is precisely $\tilde G$. 
As a consequence, our estimate of the matching coverage by the differential equation method, 
provides a remarkably accurate estimation of the matching coverage of the considered local algorithm on a given bipartite graph, a result that we support with extensive simulations. 

The extension of the differential equation method to measure-valued processes, resulting from a simultaneous construction of the CM and an exploration algorithm on the latter, first appeared in \cite{decreusefond2012}, to describe the propagation of an SIR epidemics on an heterogenous graph. Then a closely related idea was applied in \cite{Jam}, to approximate the size of maximal independent sets on graphs with given degree distributions (for a more direct use of the differential equation method on the same topic, see also \cite{Jan}). This led to various extensions, to address various coverage problems in CSMA-type algorithms for radio-mobile and ad-hoc communication networks, see 
\cite{bermolen2016estimating}. 

Before that, measure-valued processes Markov processes were first introduced in the queueing literature. Space of measures are amenable to showing weak convergence of sequence of processes under an appropriate scaling, and the exhaustive representation of queueing systems by point measures, in which each Dirac mass typically represents the characteristic of a customer in line, led to fruitful developments 
      in fluid and diffusion approximations of the systems at hand, see e.g. \cite{gromoll2002fluid} for processor sharing queues, \cite{doytchinov2001real,decreusefond2008fluid} for queueing systems with impatient customers, 
      \cite{decreusefond2008functional} for infinite-server queues, or \cite{kaspi2011law} for many-server queues. 
      
      This paper is organized as follows. After a preliminary section, in Section \ref{sec:explo} we define the local matching algorithms we will address hereafter, among which, the {\em minimal residual} algorithm, which can be seen as an analog of the Degree-greedy algorithm defined in \cite{bermolen2019degree} for the construction of maximal independent sets on random graphs.  
       In Section \ref{sec:CM}, we detail our joint construction of a random (multi-)graph by the CM, and of an online matching on the latter. As will clearly appear below, the local matching algorithms then work as some kind of tagging processes,  alongside the uniform-pairing construction process of the configuration model. In Section \ref{sec:measures}, we introduce the measure-valued Markov processes representing the two previous constructions. In particular, Theorem \ref{thm:coupling} presents a simple connexion between the two processes. In Section \ref{sec:fluid}, we introduce our {\em hydrodynamic approximation}, namely, the limiting system of Ordinary Differential Equations approximating the dynamics of the measure-valued process representing the construction of Section \ref{sec:CM}. 
       Finally, in Section \ref{sec:simu} we present our simulation results. We first illustrate the accuracy of our approximation for various degree distributions, and then compare the matching coverage obtained by the minimal residual algorithm to that resulting from the greedy algorithm. We also briefly compare our results with the results of Karp et al. in \cite{Karp}.  All our code and data are available upon request.

\section{Preliminary}
We denote respectively by $\R$, $\R_+$, $\N$ and $\N^*$, the sets of real numbers, non-negative real numbers, of non-negative and positive integers. For any finite set $A$, we write $|A|$ for the cardinality of $A$. 
For any $a\le b\in \N$, we let $\llbracket a,b \rrbracket$ denote the integer interval $\{a,a+1,...,b-1,b\}$. 
We let $\M(\N)$ be the set of integer finite measures on $\N$. Denote for any $\mu$ in $\M(\N)$ and any 
$\varphi:\R\to\R$ such that the following is well defined, 
\[\cro{\mu,\varphi} =\int_{\N}\varphi(x) \d \mu(x)=\sum_{i\in\N}\varphi(i)\mu(i).\] 
Let $\mathcal B_b(\N)$ be the space of bounded functions: $\N\to \R$. We denote by $\mathbf 1$,  the function $\R\to \R$ that is identically equal to 1, 
$X$ the identity function on $\R$ and $X^2(x)=x^2$ for all 
$x\in\R$. As a consequence, for any $\mu\in\M(\N)$, $\cro{\mu,\mathbf 1}$, $\cro{\mu,X}$ and $\cro{\mu,X^2}$ denote respectively the total mass, the first and the second moment of $\mu$, if any. Denote also for all $\varphi:\N\to\R$, 
      by $\Delta\varphi$ the discrete gradient of $\varphi$, defined by 
      \[\Delta\varphi(x)=\varphi(x)-\varphi(x-1),\, x \in \N.\] 

All random variable are defined on a common probability space $(\Omega,\mathcal F,\P)$. 
The space $\M(\N)$ is endowed with its weak topology, in other words $\mu^n \longrightarrow \mu$ in $\M(\N)$ whenever 
$\cro{\mu^n,\varphi} \longrightarrow \cro{\mu,\varphi}$ for all bounded $\varphi$. Then it is well known that $\M(\N)$ is a Polish space, see e.g. \cite{Bill}. 
%Any limit for a measure is intended in this regard.

For any Polish space $E$, we denote by $\mathrm D([0,1],E)$ the space of \textsl{cadlag} 
(i.e. right-continuous, with left-hand limits at all points) $E$-valued processes on $[0,1]$, equipped 
with the Skorokhod topology. 
%We refer to the classical work of Billingsley  for the characterization of convergence of measures and cadlag processes.  

\section{Local matching algorithms on a graph}
\label{sec:explo} 
{We start by introducing the general class of matching algorithms we will consider hereafter.}

%\begin{figure}[!ht]
%			\centering
%			\includegraphics[scale=0.4]{bip.eps}
%			\label{fig:Graphe}
%			\caption{A Graph}
%\end{figure}

\subsection{Matching through exploration on a graph}
{We now define the class of {\em local} matching algorithms we consider in this work, in a sense to be specified below. 
We fix a (non-oriented) bipartite graph $G=(V,E)$, where $V=V^+\cup V^-$ is the set of nodes and $E\subset V^+\times V^-$ is the set of edges. 
We assume that the bipartition of the graph is balanced, i.e., 
$|V^+|=|V^-|=n$ for some positive integer $n$. 
%See an example in Figure \ref{fig:Graphe}. 
%The degree of each node in $G_0$ is the number of its neighbors in the other part of the bi-partition.   
At any time $t$, we are given two disjoint subgraphs of $G$:
\begin{itemize}
\item $\tilde G_t=(\tilde U_t, \tilde E_t)$ 
represents the {\em remaining graph} of our construction at $t$. The nodes of $\tilde U_t$ are said {\em undetermined} at $t$. 
% (The ``tildes'' in our notation will then be opposed to an alternative construction, in which the corresponding parameters will appear without the ``tilde''.) 
As will appear clearly in the construction below, the graph $\tilde G_t$ is itself bipartite (as a subgraph of a bipartite graph), and we denote by 
$\tilde U_t=\tilde U_t^+\cup \tilde U^-_t$, the bipartition of its nodes.  For any $\tilde v \in \tilde U_t$ we denote by $N_t(\tilde v)$ the set of neighbors of $\tilde v$ at $t$, 
and by $d_t(\tilde v)=|N_t(\tilde v)|$ the {degree} of $\tilde v$ at $t$, namely, the number of its neighbors in $\tilde G_t$. 
\item $\tilde G'_t=(\tilde M_t,\tilde E'_t)$ is the {\em matching} at time $t$. This is also a (bipartite) subgraph of $G$ in which all nodes are of degree one. 
The set $\tilde M_t:=\tilde M^+_t \cup \tilde M^-_t$ gathers the {\em matched} nodes at $t$.
\end{itemize}}
We also let $\tilde I_t$ be the set of {\em isolated} nodes at $t$. These are nodes that won't appear in the final matching, 
and we have the disjoint union $V=\tilde U_t \cup \tilde M_t\cup \tilde I_t$. 

At first, all nodes are undetermined, i.e. we set $\tilde U_0=V$, $\tilde M_0=\emptyset$ and $\tilde I_0=\emptyset$. 
We also set $\tilde E_0=E$ and $\tilde E'_0=\emptyset$, in a way that 
$\tilde G_0=G$ and $\tilde G'_0=(\emptyset, \emptyset)$. 
The matching algorithm then proceeds by induction, as follows: At any time $t\in \llbracket 0,n-1\rrbracket$, 
\begin{enumerate}
%\item[{\bf Step 1.}] If $\tilde E_t=\emptyset$, then we stop the algorithm. Else, 
\item[{\bf Step $\tilde 1$.}] A vertex $\tilde J^+$ is chosen {uniformly at random in $\tilde U^+_t$.} 
%We denote the vertices of $\tilde V^-_t$ as $\left\{v_1,...,v_{|\tilde V^-_t|}\right\}$. 

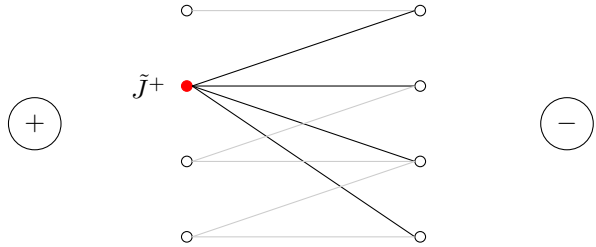
\begin{figure}[h!]			
			\begin{center}
\begin{tikzpicture}
% coordonnées des sommets côté +, puis côté -
\coordinate (p1) at (0,5);
\coordinate (p2) at (0,4);
\coordinate (p3) at (0,3);
\coordinate (p4) at (0,2);
\coordinate (m1) at (3,5);
\coordinate (m2) at (3,4);
\coordinate (m3) at (3,3);
\coordinate (m4) at (3,2);
% sommets côté +, puis côté -
\draw (p1) circle (0.07);
\draw[color=red,fill] (p2) circle (0.07);
\draw (p2)+(-0.5,0) node {$\tilde{J}^{+}$};
\draw (p3) circle (0.07);
\draw (p4) circle (0.07);
\draw (m1)+(0.07,0) circle (0.07);
\draw (m2)+(0.07,0) circle (0.07);
\draw (m3)+(0.07,0) circle (0.07);
\draw (m4)+(0.07,0) circle (0.07);
% arêtes grises et noires
\draw[color=gray!40] (p1)+(0.07,0) to (m1);
\draw[color=black] (p2)+(0.07,0) to (m1)+(-0.07,0);
\draw[color=black] (p2)+(0.07,0) to (m2)+(-0.07,0);
\draw[color=black] (p2)+(0.07,0) to (m3)+(-0.07,0);
\draw[color=black] (p2)+(0.07,0) to (m4)+(-0.07,0);
\draw[color=gray!40] (p3)+(0.07,0) to (m2)+(-0.07,0);
\draw[color=gray!40] (p3)+(0.07,0) to (m3)+(-0.07,0);
\draw[color=gray!40] (p4)+(0.07,0) to (m3)+(-0.07,0);
\draw[color=gray!40] (p4)+(0.07,0) to (m4)+(-0.07,0);
% indication des côtés + et -
\node[circle,draw] at (-2,3.5) {$+$};
\node[circle,draw] at (5,3.5) {$-$};
\end{tikzpicture}
			\label{fig:S1}
			\caption{Exploring a vertex of a bipartite graph}
			\end{center}		
\end{figure}

\item[{\bf Step $\tilde 2$.}] We select a match for $\tilde J^+$ if this is feasible at all: 
\begin{enumerate}
\item[{\bf Step $\tilde 2$a)}] If $\tilde J^+$ has at least one neighbor in $\tilde U_t^-$, i.e., if the set $N_t(\tilde J^+)$ is non-empty, 
then it is the role of the matching criterion to select one of these, say $\tilde J^-$, to be the {\em match} of $\tilde J^+$. 
We assume that the criterion is {\em local} in the sense that it only depends on the degrees of the neighbors of $\tilde J^+$, and possibly on a 
random choice that is independent of every other random variables so far. To formalize this, fix a deterministic mapping 
\begin{equation}\label{eq:defphi}
\Phi:\begin{cases}\displaystyle\bigcup_{k=1}^n \,(\N^*)^k &\longrightarrow \llbracket 1,n \rrbracket \\
				\displaystyle(x_1,...,x_k) &\longmapsto j \in \llbracket 1,k \rrbracket
				\end{cases}.\end{equation}
Then, let $k:=|N_t(\tilde J^+)|\le |\tilde U^-_t|$,
denote by $\left\{\tilde J^-_{1},...,\tilde J^-_{k}\right\}$, the elements of $N_t(\tilde J^+)$, and draw a permutation 
$\sigma\in\mathfrak S_{k}$ uniformly at random. The permutation $\sigma$ is possibly used to draw a list of {priorities} between elements of $N_t(\tilde J^+)$. 
Then, we set $\tilde J^-:=\Phi(\tilde J^+):=\tilde J^-_{\sigma(j)}$, where 
$j=\Phi\left(d_t(\tilde J^-_{\sigma(1)}),...,d_t(\tilde J^-_{\sigma(k)})\right)$. 
{In Figure \ref{fig:choix}, the \textit{match} is represented in blue for two examples of local matching criteria: {\sc greedy} and {\sc minres}, properly defined in Definition \ref{def:algo}.}
\begin{figure}[!h]
			\centering
			\includegraphics[scale=0.24]{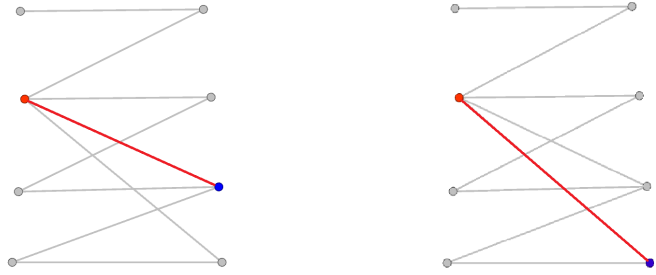}
			\caption{Selecting the match for the two algorithms {\sc greedy} (left) and {\sc minres} (right), defined in Definition \ref{def:algo}.}
			\label{fig:choix}		
\end{figure}
Then the matched nodes $\tilde J^+$ and $\tilde J^-$ are deleted from the graph $\tilde G_{t}$, and added to the matching $\tilde G'_t$. 
Specifically, we set 
\[\left\{\begin{array}{lll}
\tilde U_{t+1}&=\tilde U_t\setminus\{\tilde J^+,\tilde J^-\},\quad &\tilde G_{t+1}=\mbox{Induced subgraph of }\tilde U_{t+1}\mbox{ in }\tilde G_t,\\
\tilde M_{t+1}&=\tilde M_t\cup\{\tilde J^+,\tilde J^-\},\,\quad &\tilde E'_{t+1}=\tilde E_t\cup\{\{\tilde J^+,\tilde J^-\}\},\\
\tilde I_{t+1}&=\tilde I_t.&
\end{array}\right.\]
Figure \ref{fig:finaux} shows the resulting graph $\tilde G_{t+1}$. 
%for the two algorithms {\sc greedy} and {\sc minres}. 
\begin{figure}[!h]
			\begin{center}
			\includegraphics[scale=0.3]{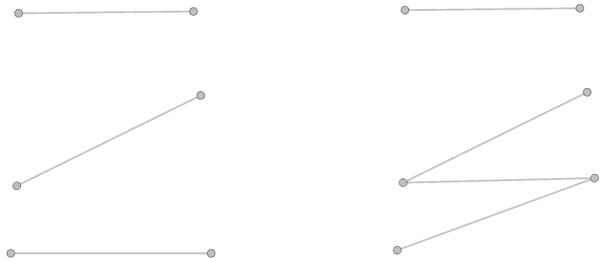}
			\caption{Resulting graph $\tilde G_{t+1}$ after the deletion of the matched vertices, for {\sc greedy} (left) and {\sc minres} (right)}. 
			%The {\em residual degrees} of all nodes of $G_1$ are the degrees of these nodes in $G_1$.
			\label{fig:finaux}
			\end{center}
\end{figure}
\item[{\bf Step $\tilde 2$b)}] If $\tilde J^+$ has no neighbor in $\tilde U_t^-$, i.e., $N_t(\tilde J^+)=\emptyset$, then 
$\tilde J^+$ is just added to the set of isolated nodes, i.e. we set 
\begin{equation*}
\tilde U_{t+1}=\tilde U_t,\quad \tilde E_{t+1}=\tilde E_t,\quad
\tilde M_{t+1}=\tilde M_t,\quad \tilde E'_{t+1}=\tilde E_t,\quad \tilde I^+_{t+1}=\tilde I^+_t\cup \{\tilde I^+\}.
\end{equation*}
 % at step 2.
\end{enumerate}
\item[{\bf Step $\tilde 3$.}] We set $t:=t+1$ and go back to step 1. 
\end{enumerate}
At the terminating point $t=n$ of the procedure, we necessarily get that 
$|\tilde M^+_n|=|\tilde M^-_n|$ and $\tilde U^+_n=\emptyset$. 
Notice that, if $|\tilde I^+_n|\ne 0$, then there are at time $n$, as many undetermined nodes on the right-hand side as of isolated nodes on the left-hand side, namely there are $|\tilde I^+_n|$ of them. We can then set all undetermined nodes on the right-hand side as isolated, since the latter nodes can no longer be attached to the matching, i.e. we set $\tilde I^-_n:=\tilde U^-_n$, and then $\tilde U^-_n := \emptyset$.  
We finally obtain that $$|\tilde M_n|+|\tilde I_n|=2n,$$ in other words all nodes are either matched, or isolated. 
%$$T=\inf\{t\in\llbracket 0,n \rrbracket,\,\tilde E_t=\emptyset\}.$$
%At any step, observe that nothing happens if the selected node has no neighbor - i.e. it is of degree 0. 
%In other words, at $T$ we end up with final remaining graph $\tilde G_T=(\tilde U_T,\emptyset)$ whose nodes, if any, are all of degree 0. 
The {\em matching coverage} 
$\tilde{\mathbf M}^n_\Phi(G)$ is then the proportion of initial nodes that ended up in the matching at the termination time $n$, that is, 
\begin{equation}
\label{eq:ratio1}
\tilde{\mathbf M}^n_\Phi(G)={|\tilde M_n|\over 2n}=1-{|\tilde I_n|\over 2n}\, \,\in [0,1].
\end{equation}

%\textit{The superscript $n$ on $\tilde{\mathbf M}^n_\Phi(G)$ designates the graph size, which is also our scaling parameter.}

\noindent We now define two particular local matching algorithms,
\begin{definition}
\label{def:algo}\rm
\begin{itemize}
\item We say that $\Phi$ is {\em greedy}, and we denote $\Phi=\textsc{greedy}$ if, at step 3a) above $\tilde J^-$ is chosen uniformly at random, independently of all r.v.'s involved up to $t$, within the set $N_t(\tilde J^+)$. Specifically, in (\ref{eq:defphi}) we set $\Phi(x_1,...,x_k)=1$ for any $k\in \llbracket 1,n \rrbracket$ and all $(x_1,...,x_k)\in(\N^*)^k$, 
%$|N_t(I^+)|=k\le |\tilde V^-_t|\le n$, for , 
in a way that $\tilde J^-=\tilde v_{\sigma(1)}$. In other words, the permutation $\sigma$ 
is used to draw, uniformly at random, a list of priorities between the elements of $N_t(\tilde J^+)$, and the priority node $\tilde v_{\sigma(1)}$ is chosen 
as the match of $\tilde J^+$. %no matter what its degree is. 
\item We say that $\Phi$ is {\em minimal residual}, and we denote $\Phi=\textsc{minres}$ if, at step 3a) above, we chose 
$\tilde J^-$ uniformly at random, independently of all r.v.'s up to $t$, within the set of nodes of minimal residual degree in $\tilde G_t$ amongst the neighbors 
of $\tilde J^+$. Then, we use again $\sigma$ to construct a list of priorities between the nodes of minimal degree in $\tilde U^-_t$, and in 
(\ref{eq:defphi}) we set 
\[\Phi(x_1,...,x_k)=\min\Bigl\{\ell \in \llbracket 1,k \rrbracket\,:\,\ell \in \mbox{argmin}\left\{x_i\,:\,i\in\llbracket 1,k \rrbracket\right\}\Bigl\}\]
for all $k$ and all $(x_1,...,x_k)\in(\N^*)^k$. 
\end{itemize}
\end{definition}

\begin{remark}
A similar algorithm, called {\em Degree-greedy}, was proposed in \cite{bermolen2019degree} for constructing maximal independent sets on general graphs. 
\end{remark}

%Each iteration will allows us to either add an edge to the matching, or attain an isolated vertex. If we ignore the isolated edges, the longer an algorithm goes, the better it is. 
%In summary : each iteration is roughly equivalent to adding an edge to the matching.
%

\begin{algorithm}
\caption{General local matching algorithm for a choice criterion $\Phi$}
\begin{algorithmic}
\REQUIRE Non-empty bipartite graph $G(V^+\cup V^-,E)$
\STATE $\mbox{Matching} \leftarrow \emptyset$;
\FOR{$t < n$}
	\STATE Pick a (uniformly) random vertice $\tilde J^+$ in $\tilde U^+$;
	\STATE Discover its neighborhood $N(\tilde J^+)$ in $\tilde U^-$;
	\STATE $\tilde U^+ \leftarrow \tilde U^+ \setminus \left\{\tilde J^+\right\}$;
	\IF{$N(\tilde J^+)$ is empty}
		\STATE  Do not do anything more%$V \leftarrow V - \left\{s\right\}$;
	\ELSE
		\STATE $\tilde J^- \leftarrow \Phi(\tilde J^+)$;
		\STATE $\tilde U^- \leftarrow \tilde U^- \setminus \left\{\tilde J^-\right\}$;
		\STATE $\mbox{Matching} \leftarrow \mbox{Matching} \cup (\tilde J^+,\tilde J^-)$;
	\ENDIF
	
	$t \leftarrow t+1$;
\ENDFOR
\end{algorithmic}
\end{algorithm}

\section{Coupled construction with the Configuration Model}
\label{sec:CM}

The previous matching algorithms are based on an exploration of the bipartite graph under consideration. By their inductive construction, as defined above, they naturally induce a Markovian dynamics. However, throughout the construction, one needs to keep track of the whole topology of the graph. As the size of the graph grows large, this approach has two main limitations: First, it leads to a dramatic increase of the size of the state space of the underlying Markov process - see below. Second, in practical cases, one rarely has access to the whole topology of a large network. Instead, a more restricted statistical information can be made available, such as the distribution of the degrees (i.e. the size of the neighborhoods) of the nodes. 

To address both difficulties at once, in this section we show how to assess the performance of the algorithms under consideration, by introducing a Markovian construction based 
solely on the degree distribution of the nodes, without any knowledge of the precise topology of the graph. 
%On another hand, an infinite graph can de defined through several different ways, for example as a local limit of a finite graph. 
For doing so, one way is to generate uniformly at random, a graph having a prescribed degree distribution, and to perform simultaneously the matching algorithm under consideration, as described in Section \ref{sec:explo}.

As is well known, a uniform random realization of a graph having a given degree distribution can be obtained via the uniform pairing procedure, inherent to the so-called Configuration Model (CM, for short), see \cite{bollobas2001random,molloy2011critical}. This procedure consists of initially considering that all nodes are detached from the graph, and have as many half-edges as their degree, and then to selecting sequentially, uniformly at random, nodes, and attaching to the graph by pairing their half-edges to other ones, chosen again uniformly at random. It is easily seen that such a procedure in general leads to a multi-graph, i.e. a graph containing self-loops and/or multiple edges; however the number of such ``bad'' edges can be shown to be a $o(1/n)$, or even less \cite{newman2018networks}, see e.g. \cite{RDH} for the CM on general graph, and \cite{OlvChen} for CM on bipartite and oriented graphs.

In this section, the dynamics of the analog of the local algorithms defined in Section \ref{sec:explo} are constructed {\em simulatenously} with the configuration model. 
This joint construction will induce a more simple Markovian dynamics, leading in turn to a simple approximation of the matching coverage of the model, see Section \ref{sec:fluid}. 
%As will be observed below, unlike $\proc{(\tilde \mu^+_t,\tilde \mu^-_t)}$, $\proc{(\mu_t^+,\mu_t^-)}$ can be built into a scalable Markov chain, because the configuration model only depends on the degree distribution. 

\subsection{Local matching algorithms on the CM}
%(For an in-depth description of the configuration model and its properties see \cite{RDH} \cite{newman2018networks})

Let $\xi^+$ and $\xi^-$ be two probability measures on $\N$. 
Let $n$ be a positive integer and $\mathbf d^+:=(d^+(1),...,d^+(n))$ and $\mathbf d^-:=(d^-(1),...,d^-(n))$ be two $n$-samples, respectively of the probability distributions 
$\xi^+$ and $\xi^-$. These two samples are conditioned to have the same total mass, i.e., to be such that 
$\sum_{i=1}^nd^+(i)=\sum_{i=1}^n d^-(i).$ Their elements will hereafter represent the degrees of the nodes on both sides of the bipartition, 
%Let $\hat\mu_0^+$ and 
%$\hat\mu_0^-$ be the respective empirical measures, that is, 
%\[\hat\mu^{+}_0= \sum_{i=1}^n \delta_{d^{+}(i)},\quad \hat\mu^{-}_0= \sum_{i=1}^n \delta_{d^{-}(i)}.\]
see e.g. Figure \ref{fig:ex0} for $n = 4$. 
\begin{figure}[!h]
			\begin{center}
			\includegraphics[scale=0.3]{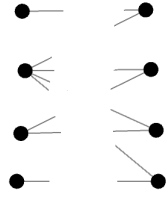}
			\caption{A running example : $\hat\mu_0^+ = 2\delta_1 + \delta_2 + \delta_4$ and $\hat\mu_0^- = 4\delta_2$}
			\label{fig:ex0}
			\end{center}
\end{figure}
%\begin{remark}
%For $\mu^+_0$ and $\mu_0^-$ to represent proper graph degree distributions, we would have to make further assumptions on the measures 
%$\mu^+_0$ and $\mu^-_0$. For example, these measures need to have the same total mass, namely, denoting $X(x)=x$ for all $x\in\mathbb R$, we must have that 
%\[\cro{\mu^+_0,X} = \cro{\mu^-_0,X}.\]
%However, the upcoming construction remains valid no matter what. 
%\end{remark}

We now introduce our joint construction of a multi-graph having degree 
distributions $\mathbf d^+$ and $\mathbf d^-$, and of a matching on the latter. 
We let $V^+=\{v_1^+,...,v_n^+\}$ (resp., $V^-=\{v_1^-,...,v_n^-\}$) be the set of nodes of the graph on the left-hand  (resp., right-hand) side of the bipartition, and view for all $i\in\llbracket 1,n \rrbracket$, $d^+(i)$ (resp., $d^-(i)$) as the {\em degree} of node $v_i^+$ 
(resp. $v_i^-$). We then set for all $i$, 
$a_0(v^+_i)=d^+(i)$ and $a_0(v^-_i)=d^-(i)$, and define the two following sets, 
\[A^+_0=\Bigl\{\bigl(v^+_{1},a_0(v^+_{1})\bigl),\cdots,\bigl(v^+_{n},a_0(n)\bigl)\Bigl\},\quad 
A^-_0=\Bigl\{\bigl(v^-_{1},a_0(v^-_{1})\bigl),\cdots,\bigl(v^-_{n},a_0(v^-_{n})\bigl)\Bigl\}.\]
Observe that we have $\sum_{l=1}^n a_0(v^+_{i_l})=\sum_{l=1}^p a_0(v^-_{j_l})$ by assumption.             
    At first, the graph $G_0$ consists of the set of disconnected nodes $V:=V^+\cup V^-$, i.e., $G_0=(V,\emptyset)$. We also set 
    $M_0^+= M_0^-=\emptyset$, $U_0^+=V^+$ and $U^-_0=V^-$, and let $G'_0$ be the empty graph, i.e. $G'_0=(\emptyset,\emptyset)$. 
    We then proceed by induction, as follows. At any time $t$, we are given: 
\begin{itemize}
\item A bipartite multi-graph $G_t=(V,E_t)=(M_t\cup U_t\cup I_t,E_t)$ representing the partially constructed connexions between elements of $V$, 
where we denote by :
\begin{itemize}
\item $M_t^+$ (resp., $M_t^-$) the set of {\em matched} nodes at $t$ on the left-hand (resp., right-hand) side at $t$, which are nodes that are fully attached to the graph at $t$, and belong to the matching at $t$; 
\item $U_t^+$ (resp., $U_t^-$), the set of {\em undetermined} nodes at $t$ on the left-hand (resp., right-hand) side, that is, nodes that do not belong to the matching at $t$, but can still be attached to it; 
\item $I_t^+$, the set of {\em isolated} nodes at $t$ on the left-hand side, that is, nodes that are already fully attached to the graph at $t$, but do not belong to the matching at $t$. 
\end{itemize}
We also set $M_t=M^+_t\cup M^-_t$ and $U_t=U^+_t\cup U^-_t$. 
By our very construction, all nodes of $M_t$ (if any) will have degree at least one in 
$G_t$, and we let $U^+_t=\{v^+_{i_1},...,v^+_{i_p}\}\subset V^+$ and $U^-_t=\{v^-_{j_1},...,v^-_{j_r}\}\subset V^-$. (We skip the dependance in $t$ in the parameters $v^{+/-}_{j_l}$, for short.) 
\item A perfect {\em matching} $G'_t=(M_t,E'_t)$ 
       on the induced subgraph of $M_t$ in $G_t$. In particular, 
      $E'_t$ is a set of subsets of pairs of $M_t$ of the form $\{v^+_i,v^-_j\}$ for $v_i^+\in M^+_t$ and $v_j^-\in M_t^-$, such that any element of $M_t$ appears in exactly one pair of $E'_t$. 
\item A couple of sets of pairs 
\[A^+_t=\Bigl\{\bigl(v^+_{i_1},a_t(v^+_{i_1})\bigl),\cdots,\bigl(v^+_{i_p},a_t(v^+_{i_p})\bigl)\Bigl\},\quad 
A^-_t=\Bigl\{\bigl(v^-_{j_1},a_t(v^-_{j_1})\bigl),\cdots,\bigl(v^-_{j_n},a_t(v^-_{j_n})\bigl)\Bigl\},\]
such that $\sum_{l=1}^n a_t(v^+_{i_l})=\sum_{l=1}^p a_t(v^-_{j_l})$ (possibly understanding sums over empty sets as 0). 
%point measures $(\mu^+_t,\mu^-_t)$ 
%of $\M(\N)$, denoted by   
%\[\mu^{+}_t= \sum_{\ell =1}^{p} \delta_{d_t(i^+_\ell)},\quad 
%\mu^{-}_t= \sum_{\ell=1}^{n} \delta_{d_t(j^+_\ell)},\]
For any $l \in \llbracket 1,p \rrbracket$ we interpret $a_t(v^+_{i_l})$ as the {\em availability} 
of node $v^+_{i_l}$ at $t$, that is, the number of {available half-edges} (towards the right-hand side) of $v^+_{i_l}$, and likewise for the right-hand side. See an example on Figure \ref{Fig:ex1}. 
\end{itemize}
\begin{figure}[!h]
			\begin{center}
			\includegraphics[scale=0.3]{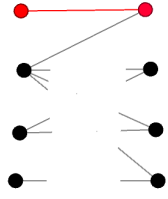}
			\caption{State at iteration $t$ : the matched nodes (elements of $G'_t$) are  red. The undetermined nodes (labelled from top to bottom) are black, and we get 
			$A_t^+=\left\{(v^+_{i_1},3),(v^+_{i_2},2),(v^+_{i_3},1)\right\}$; 
			$A_t^-=\left\{(v^-_{j_1},2),(v^-_{j_2},2),(v^-_{j_3},2)\right\}$.}
			\label{Fig:ex1}
			\end{center}
\end{figure}
\noindent Then we proceed as follows: at any time $t\in \llbracket 0,n-1 \rrbracket$, 
  \begin{enumerate}
%\item[{\bf Step 1.}] If $A^+_t$ is empty or $\sum_{l=1}^n a_t(v^+_{i_l})=0$, i.e., there are no available half-edge left at time $t$, then we stop the procedure. 
  \item[{\bf Step 1.}] We select an uncompleted node on the left-hand uniformly at random, and then 
complete its neighborhood in the graph. 
Specifically, we first draw the realization of a r.v. of uniform distribution 
$\mathscr U\left(\llbracket 1,p \rrbracket\right)$. Say the outcome is $q$. Then, we set $J^+=v^+_{i_q}$, see Figure \ref{Fig:ex2}.
  \begin{figure}[!h]
			\begin{center}
			\includegraphics[scale=0.3]{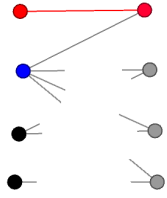}
			\caption{$J^+$ chosen at iteration t}
			\label{Fig:ex2}
			\end{center}
  \end{figure}
%\item[{\bf Step 3.}] %If $d_t\left(I^+\right)>0$, 
We then complete the $a_t\left(J^+\right)$ half-edges of $J^+$, if any, into edges by joining them to half edges picked uniformly at random among the half-edges on the right-hand side. Specifically, 
	\begin{enumerate}
	  \item[{\bf Step 1a)}] 
	If $a_t(J^+)>0$, we draw $a_t\left(J^+\right)$ elements uniformly at random among $n$ ``bunches'' of elements of respective sizes $a_t\left(v^-_{j_l}\right)$, $l \in \llbracket 1,n \rrbracket$, 
to construct the emanating edges of node $J^+$ toward the `-' side, and thereby, its neighbors on the opposite side. We let $\{J^{-}_1,...,J^{-}_k\} \subset \left\{v^-_{j_1},...,v^-_{j_n}\right\}$, where $k \le  a_t\left(J^+\right)$, be the set of the $k$ neighbors of $J^{+}$, i.e., the indexes of the bunches containing the chosen half-edges. Note that this operation may lead to multiple edges, whenever several elements of the same bunch of half-edges are chosen, and in this case we have 
$k <  a_t\left(J^+\right)$. 
For all $l \in \llbracket 1,k \rrbracket$, we let $E^{-}_l$ be the number of {edges} shared by $J^{-}_l$ with $J^{+}$, that is, the number of elements in the bunch $J^{-}_l$ chosen in the uniform pairing procedure. Then the neighborhood of $J^+$ is completed, 
 %\quad\mbox{ and  }\quad M^+_{t+1}=M^+_t\cup \{J^+\}.$$ 
      see Figure \ref{Fig:ex3}. 
\begin{figure}[!h]
			\begin{center}
			\includegraphics[scale=0.3]{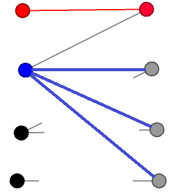}
			\caption{The neighbors of $J^+$ are discovered. 
			%$\mu_{t+1}^+ = \delta_1 + \delta_2$ and $\mu_{t+1}^- = 3\delta_1$ 
			}
			\label{Fig:ex3}
			\end{center}
\end{figure} 
   \item[{\bf Step 1b)}] If $a_t(J^+)=0$, then node $J^+$ has no available half-edge at this point. 
Its neighborhood in the graph has thus already been fully determined in the previous steps, 
but this node cannot be added to the matching at $t$. This node then becomes isolated (in the sense that it won't be added to the matching), and we set 
\[I^+_{t+1}=I^+_t \cup \{J^+\},\quad U_{t+1}=U_t\setminus \{J^+\}.\]
%\[E_{t+1}=E_t.\]%\quad\mbox{ and }\quad M^+_{t+1}=M^+_t.\]
  \end{enumerate}
\item[{\bf Step 2.}] 
We determine the match of $J^+$ within the set $\{J^{-}_1,...,J^{-}_k\}$ if the latter is non-empty, and then we complete the neighborhood of the latter node in the graph. Specifically, 
	\begin{enumerate}
	\item[{\bf Step 2a)}] If $a_t(J^+)>0$, then the node $J^+$ will indeed be attached to the matching at $t+1$, and so it is not an isolated node. We then set $I^+_{t+1}=I^+_t.$
Then, we determine the match of $J^+$ on the right-hand side of the graph by a given matching procedure, as defined above. For this, we first draw a permutation 
$\sigma \in \mathfrak S_k$ uniformly at random, and then set $J^-=J^-_{\sigma(m)}$, where $m=\Phi\left(a_t\left(J^-_{\sigma(1)}\right),\,...\,,a_t\left(J^-_{\sigma(k)}\right)\right)$, for a mapping $\Phi$ of the form (\ref{eq:defphi}). See Figure \ref{Fig:ex4}. 
Then we add both nodes $J^+$ and $J^-$ together with 
 the edge $\{J^+,J^-\}$ to the matching $G'_{t+1}$, that is, we set 
 $$M^+_{t+1}=M^+_t\cup \{J^+\},\quad M^-_{t+1}=M^-_t\cup \{J^-\}\quad\mbox{ and }\quad E'_{t+1}=E_t\cup \bigl\{\{J^+,J^-\}\bigl\}.$$
   \begin{figure}[!h]
			\begin{center}
			\includegraphics[scale=0.3]{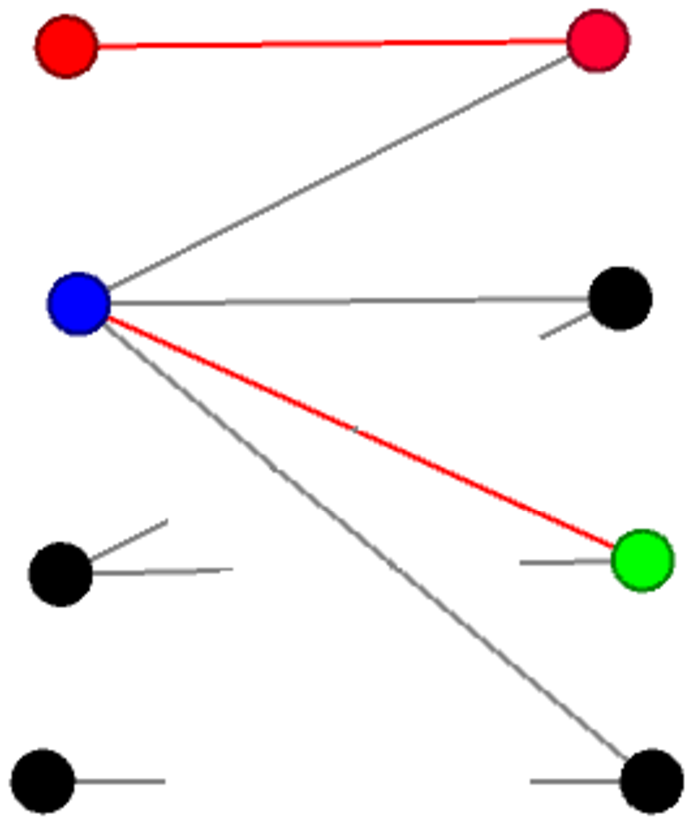}
			\caption{\mbox{choice} of a Match {\color{green}$J^-$} }
			\label{Fig:ex4}
			\end{center}
\end{figure} 

	Finally, we determine the other neighbors of 
	$J^-$ on the left-hand side. Specifically, we are in the following alternative: 
         \begin{itemize} 
         \item If $a_t(J^-)>E^-_{\sigma(m)}$, the node $J^-=J^-_{\sigma(m)}$ still has $a_t(J^-)-E^-_{\sigma(m)}$ uncompleted half-edges. 
      We then draw at random the indexes $J^{+}_1,...,J^{+}_\ell$ of the neighbors of $J^{-}$ on the left-hand side other than $J^{+}$, according to the same uniform pairing procedure as in Step 1a). Namely, we draw $a_t\left(J^-\right) - E^-_{\sigma(m)}$ elements uniformly at random among the $p-1$ ``bunches'' of elements of respective sizes $a_t\left(v^+_{i_l}\right)$, $l \in \llbracket 1,r \rrbracket\setminus \{q\}$ (that is, all unmatched elements on the left-hand side now that $J^+=v^+_{i_q}$ has been matched), to determine the other neighbors of $J^-$ on the left-hand side. 
      We let $\{J^{+}_1,...,J^{+}_\ell\} \subset \left\{v^+_{i_1},...,v^+_{i_p}\right\}\setminus\{i^+_q\}$ (for $\ell \le  a_t\left(J^-\right) - E^-_{\sigma(m)}$) 
      be the set of the $\ell$ neighbors of $J^{+}$, i.e., the indexes of the bunches containing the chosen half-edges. 
      For all $l \in \llbracket 1,\ell \rrbracket$, we let $E^{+}_l$ be the number of {edges} shared by $J^{+}_l$ with $J^{-}$, that is, the number of elements in the bunch $J^{+}_l$ chosen in the uniform pairing procedure. 
      \item If $a_t(J^-)=E^-_{\sigma(m)}$ (which is necessarily the case if $r=1$), then $J^-$ has no more open half-edges to complete.  
      In this case we do not do anything at this stage, and just set $\ell=0$. 
      \end{itemize} 
      In all cases, the neighborhoods of both nodes $J^+$ and $J^-$ are now complete, and we set 
      $$E_{t+1}=E_t\,\cup\, \bigl\{\{J^+,J^-_1\},...,\{J^+,J^-_k\}\bigl\}\,\,\cup\,\,\bigl\{\{J^+_1,J^-\},...,\{J^+_\ell,J^-\}\bigl\},$$ 
      where the second set on the right-hand side above is understood as empty if $\ell=0$. 
      We also update the sets $A^+_t$ and $A^-_t$ by deleting the pairs corresponding to the newly matched nodes $J^+$ and $J^-$, and by 
      updating the remaining number of open half-edges of the unmatched nodes connected to the two newly 
      matched ones, if any. In other words, we set 
       \begin{equation}
    \label{eq:dynA}
    \left\{\begin{array}{lll}
    A_{t+1}^+ &= A_t^+ &\cup \left\{\left(J^+_1,a_t\left(J^+_1\right) - E^+_1\right),\cdots \left(J^+_\ell,a_t\left(J^+_\ell\right) - E^+_\ell\right)\right\}\\
    & &\setminus \left\{(J^+,a_t(J^+)),\left(J^+_1,a_t\left(J^+_1\right) \right),\cdots \left(J^+_\ell,a_t\left(J^+_\ell\right) \right)\right\};\\\\
    A_{t+1}^- &= A_t^- &\cup \left\{\left(J^-_1,a_t\left(J^-_1\right) - E^-_1\right),\cdots \left(J^-_k,a_t\left(J^+_k\right) - E^-_k\right)\right\}\\
    & &\setminus \left\{(J^-,a_t(J^-)),\left(J^-_1,a_t\left(J^-_1\right) \right),\cdots \left(J^-_k,a_t\left(J^+_k\right) \right)\right\}.
    \end{array}\right.
    \end{equation}
    See Figure \ref{Fig:ex5}. 
      \begin{figure}[!h]
			\begin{center}
			\includegraphics[scale=0.3]{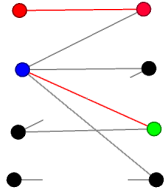}
			\caption{Determining the neighbors of the Match.}
			%: $\mu_{t+1}^+ = 2\delta_1$ and $\mu_{t+1}^- = 2\delta_1$ }
			\label{Fig:ex5}
			\end{center}
\end{figure} 
	\item[{\bf Step 2b)}] If $a_t(J^+)=0$, then no node is added to the matching, and we just set 
	\[G'_{t+1}=G'_t,\quad 
	E_{t+1} =E_t,\quad A_{t+1}^+ =A^+_t\quad \mbox{ and }\quad A_{t+1}^-=A^-_t.\]
	\end{enumerate}
	\item[{\bf Step 3.}] We set $t:=t+1$ and go back to step 1. 
\end{enumerate}

\medskip

\noindent The procedure terminates at time $n$. At that time, we end up with a bipartite multi-graph $G:=G_n=(V,E_n)$, since all half-edges have been completed. 
We then have the following result,

\begin{proposition}
Suppose that the degree distributions $\mathbf d^+:=(d^+(1),...,d^+(n))$ and $\mathbf d^-(d^-(1),...,d^-(n))$ are graphical, namely, they can lead to a bipartite graph $G$ and set 
\begin{align*}
\mathscr G(\mathbf d^+,\mathbf d^-) &=\left\{\mbox{bipartite graphs $\hat G$ on $V$ having degree distributions $\mathbf d^+$ and $\mathbf d^-$}\right\};\\
g(\mathbf d^+,\mathbf d^-) &=\mbox{Card }\mathscr G(\mathbf d^+,\mathbf d^-).
\end{align*}
Let $G$ be the resulting mutligraph of the construction of Section \ref{sec:CM}. 
Then, for any $\hat G\in \mathscr G(\mathbf d^+,\mathbf d^-)$ we get that 
\begin{equation}
\label{eq:uniform}
\pr{G=\hat G \mid \mbox{$G$ is a graph}}={1\over g(\mathbf d^+,\mathbf d^-)},
\end{equation} 
in other words, conditionally on generating a graph, the construction produces graphs uniformly 
on the set of graphs having degree distributions $\mathbf d^+$ and $\mathbf d^-$. 
\end{proposition}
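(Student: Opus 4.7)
The plan is to exploit the classical fact that the Configuration Model, viewed purely as a uniform random pairing of half-edges, yields a uniform distribution over multigraphs (weighted by a symmetry factor), and then to observe that the coupled construction of Section \ref{sec:CM} does not disturb this property because the matching criterion $\Phi$ and the choice of $J^+$ only act as a \emph{tagging} on top of the pairing procedure, never altering which half-edges get paired to which.

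First, I would recast the construction purely in the language of half-edges. Assign to each $v_i^+ \in V^+$ a set $H^+_i$ of $d^+(i)$ distinguishable half-edges, and similarly $H^-_j$ for $v_j^-$. Set $H^{\pm}=\bigcup_i H^{\pm}_i$, so $|H^+|=|H^-|=m:=\sum_i d^+(i)$. A \emph{pairing} $\pi$ is a bijection $H^+ \to H^-$; there are exactly $m!$ such pairings, and each pairing induces a bipartite multigraph. The main step is to show that the joint construction produces a pairing $\pi$ that is uniformly distributed on the set of all $m!$ pairings. I would prove this by induction on the iteration $t$: conditional on all half-edges paired before step $t$, the uniform drawing in Step 1a) and (when $\ell \ge 1$) the uniform drawing of the remaining neighbors of $J^-$ in Step 2a) amount to choosing the partners of a fresh batch of left half-edges uniformly at random among the currently unpaired right half-edges (and vice versa). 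The fact that $J^+$ itself is drawn uniformly from $U^+_t$, and that $J^-$ is selected from $\{J^-_1,\dots,J^-_k\}$ by the local rule $\Phi$ and a uniform permutation $\sigma$, is irrelevant here: these choices attach a label (matched/isolated) to nodes but do not affect the identity of the half-edge partners created at each step. Consequently, by the exchangeability of the pairing, the output $\pi$ is uniform on the $m!$ configurations.

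Next, I would translate uniformity of $\pi$ into the claim on graphs. For any simple bipartite graph $\hat G \in \mathscr G(\mathbf d^+,\mathbf d^-)$, the number of pairings $\pi$ whose induced multigraph equals $\hat G$ is exactly
\[
N(\hat G) \;=\; \prod_{i=1}^n d^+(i)! \;\cdot\; \prod_{j=1}^n d^-(j)!,
\]
since one can independently permute, for each node, the assignment of its half-edges to its incident edges, and no such permutation collapses two distinct pairings of $\hat G$ into the same one (this is where simplicity is used: for a multigraph with repeated edges the count would be reduced by the order of the automorphism group of the edge-multiset). Hence for all $\hat G \in \mathscr G(\mathbf d^+, \mathbf d^-)$,
\[
\pr{G=\hat G}\;=\;\frac{N(\hat G)}{m!}\;=\;\frac{\prod_i d^+(i)!\,\prod_j d^-(j)!}{m!},
\]
which does not depend on $\hat G$.

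Finally, conditioning on the event $\{G \text{ is a graph}\} = \{G \in \mathscr G(\mathbf d^+,\mathbf d^-)\}$ and summing over this set yields $\pr{G \text{ is a graph}} = g(\mathbf d^+,\mathbf d^-) \cdot \prod_i d^+(i)! \prod_j d^-(j)!/m!$, and dividing gives \eqref{eq:uniform}. The main subtlety, and the step I would write most carefully, is the first one: disentangling the pairing dynamics from the tagging and matching dynamics so as to argue that the order in which the batches of half-edges are matched—and the way $J^+, J^-$ are selected—does not bias the marginal law of $\pi$. Once this exchangeability is established, the rest is the standard double counting of pairings per simple graph.
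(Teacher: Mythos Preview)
Your argument is correct and in spirit coincides with the paper's: both rest on the observation that the coupled construction is, at the level of half-edges, nothing but the standard sequential uniform pairing, with the selection of $J^+$ and of the match $J^-$ acting only as a tagging that determines \emph{which} half-edges are revealed next, never \emph{how} they are paired. The paper states this as an appeal to Wormald's ``independence property'' and then imports the uniformity on simple graphs from the directed configuration model of Chen and Olvera-Cravioto via the bipartite/directed bijection, whereas you unfold the argument explicitly: uniformity of the bijection $\pi$ on all $m!$ pairings, followed by the direct count $\prod_i d^+(i)!\prod_j d^-(j)!$ of pairings per simple bipartite graph. Your route is more elementary and self-contained (no external references are needed), at the cost of spelling out the exchangeability step; the paper's route is terser but offloads the work to the cited results. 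The one point worth tightening in your write-up is the inductive step: make explicit that the half-edge whose partner is revealed next may be chosen \emph{adaptively} (here, depending on the outcome of Step~1a) through the choice of $J^-$), and that this does not affect the product $\prod_{r=0}^{m-1}(m-r)^{-1}$ because, conditional on the $r$ pairs already formed, the next partner is uniform over the $m-r$ remaining half-edges on the opposite side regardless of which half-edge is exposed.
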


\begin{proof}
At each step, the procedure of uniform choice of the nodes $J^+$ and possibly $J^-$, in the steps 
1a) and 2a) above, is exactly the uniform pairing procedure, as described in \cite{RDH,WCours}. From the so-called {\em independence property} in \cite{WCours}, the resulting multigraph $G$ is then a bipartite realization of the configuration model. 

The uniform property (\ref{eq:uniform}) follows: specifically, Theorem 2.3 in \cite{OlvChen} show how to properly build a {directed} graph from two given degree distributions while ensuring that these distributions are graphical. It is then shown in Proposition 4.1 in \cite{OlvChen}, that when the considereddistributions admit a variance (so that simple graph realizations of the CM do exist), the directed Configuration Model also preserves the uniform property on simple graphs. All that is left is to use the natural bijection between directed and bipartite graphs (as an example, see the bijection using matrices in \cite{DirVsBip}).
\end{proof}

\medskip

At time $n$, we necessarily get that 
$|M^+_n|=|M^-_n|$ and $U^+_n=\emptyset$. Also, as in the construction of Section \ref{sec:explo}, 
if $|I^+_n|\ne 0$, then there are at time $n$, as many undetermined nodes on the right-hand side as of isolated nodes on the left-hand side. We then set all undetermined nodes on the right-hand side as isolated, i.e., we set $I^-_n:=U^-_n$, and then 
$I_n=I^+_n\cup I^-_n$.  %$\tilde U^-_n := \emptyset$.
We again obtain that $|M_n|+|I_n|=2n.$ 
%$$T=\inf\{t\in\llbracket 0,n \rrbracket,\,\tilde E_t=\emptyset\}.$$
%At any step, observe that nothing happens if the selected node has no neighbor - i.e. it is of degree 0. 
%In other words, at $T$ we end up with final remaining graph $\tilde G_T=(\tilde U_T,\emptyset)$ whose nodes, if any, are all of degree 0. 
Similarly to (\ref{eq:ratio1}), the {matching coverage} 
${\mathbf M}^n_\Phi(\xi^+,\xi^-)$ is then the proportion of initial nodes that ended up in the matching at the termination time $n$, that is,
\begin{equation}
{\mathbf {M}}^n_\Phi(\xi^+,\xi^-)={|M_n|\over 2n}=1-{|I_n|\over 2n}\cdot
\label{eq:ratio2}
\end{equation}

\section{Measure-valued representations} 
\label{sec:measures}

Fix a matching criterion $\Phi$. To compare the two constructions, we define two measure-valued processes. 
%
%Let us first consider the construction induced by the exploration process of Section \ref{sec:explo}. 
%Let $\mathbf d^+$ and $\mathbf d^-$ be the degree distributions of the bipartite graph $\tilde G$, 
For all $0\le t \le n$, we define the following point measures: 
\begin{itemize}
\item We let $\tilde\mu^+_t$ (resp., $\tilde\mu^-_t$) 
be the empirical degree distribution of all undetermined nodes at $t$ on the left-hand 
(resp., right-hand) side in the remaining graph $\tilde G_t$ defined in Section \ref{sec:explo}, that is, 
\begin{equation}
\label{eq:defmutilde}
\tilde\mu^+_t=\sum_{\tilde v^+ \in \tilde U_t^+}\delta_{ d_t(\tilde v^+)},\quad \tilde\mu^-_t=\sum_{{\tilde v^- \in \tilde U_t^-}}\delta_{d_t(\tilde v^-)}.
\end{equation}
\item We let $\tilde\mu^+_t$ (resp., $\tilde\mu^-_t$) 
be the empirical distribution representing the availabilities of all undetermined nodes at $t$ on the left-hand (resp., right-hand) side, as defined in Section \ref{sec:CM}: 
\begin{equation}
\label{eq:defmu}
\mu^+_t=\sum_{v^+ \in U_t^+}\delta_{ a_t(v^+)},\quad \mu^-_t=\sum_{{v^- \in U_t^-}}\delta_{a_t(v^-)}.
\end{equation}
\end{itemize}
%Clearly, both processes $\procn{(\tilde \mu_t^+,\tilde\mu^-_t)}$ and $\procn{(\mu_t^+,\mu^-_t)}$ take values in 
%$\mathrm D([0,1],\M(\N))$. 
Now, observe that in the construction of Section \ref{sec:explo}, the isolated nodes at $t=n$ 
on the right-hand side are precisely the undetermined nodes having degree zero at the end of the construction. All the same, in the construction of Section \ref{sec:CM} the isolated nodes at $t=n$ are the undetermined nodes having degree zero at the final step. 
Second, by construction there are in both cases, as many isolated nodes on both sides 
of the bipartition. Therefore, we get that 
\[|\tilde I_n| = 2\tilde \mu^-_n(\{0\}) \quad \mbox{ and }\quad |I_n| = 2\mu^-_n(\{0\}).\]
All in all, it follows from (\ref{eq:ratio1}) and (\ref{eq:ratio2}), that the two matching coverages are respectively given by 
\begin{align}
\label{eq:ratio1bis}
\tilde{\mathbf M}^n_\Phi(G)&=1-{\tilde \mu^-_n(\{0\})\over n};\\
{\mathbf {M}}^n_\Phi(\xi^+,\xi^-) &=1-{\mu^-_n(\{0\})\over n}\cdot\label{eq:ratio2bis}
\end{align}

%\paragraph{Markov chains.}
It is easily seen from the very algorithm of Section \ref{sec:explo}, that the process 
$\procn{\tilde G_t}$ is Markov, but that the process $\procn{(\tilde \mu_t^+,\tilde\mu^-_t)}$, alone, is not. Indeed, for any $t$ the evolution from the graph $\tilde G_t$ to the graph $\tilde G_{t+1}$ depends on the whole connectivity of $\tilde G_t$ (which node is connected to which one), not only on its degree distribution $\tilde \mu_t$. 

On the other hand, regarding the construction of Section \ref{sec:CM} we have the following,

\begin{proposition}
\label{prop:Markov}
The process $\procn{(\mu_t^+,\mu^-_t)}$ is Markov. %on $\mathrm D([0,1],\M(\N))$. 
\end{proposition}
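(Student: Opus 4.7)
The plan is to verify that the conditional distribution of $(\mu_{t+1}^+, \mu_{t+1}^-)$ given the full past $\mathcal F_t = \sigma\bigl((G_s, G'_s, A_s^+, A_s^-) : s \le t\bigr)$ depends on $\mathcal F_t$ only through $(\mu_t^+, \mu_t^-)$. To do so, I would walk through each sub-step of the algorithm of Section \ref{sec:CM} and check that every random input used at step $t+1$ has a conditional law that is a function solely of $(\mu_t^+, \mu_t^-)$ and independent extra randomness, and that the deterministic update then only uses these inputs and these measures.

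First, since $J^+$ is drawn uniformly in $U^+_t$, the availability $a_t(J^+)$ has conditional distribution $\mu_t^+(\cdot)/\langle \mu_t^+, \mathbf 1 \rangle$ given $\mathcal F_t$. Next, the uniform pairing of the $a_t(J^+)$ half-edges of $J^+$ with right-side half-edges amounts to a uniform draw among $\langle \mu_t^-, X \rangle$ objects partitioned into bunches whose sizes form the multiset encoded by $\mu_t^-$. Because the labels of the right-side nodes do not influence the subsequent dynamics — only their availabilities do — the joint conditional law of the multiset $\{(a_t(J_l^-), E_l^-) : l \in \llbracket 1, k \rrbracket\}$ is a function of $a_t(J^+)$ and $\mu_t^-$ alone. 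The same reasoning applies to the draw of the remaining neighbors of $J^-$, whose joint law depends only on $a_t(J^-) - E^-_{\sigma(m)}$ and on $\mu_t^+ - \delta_{a_t(J^+)}$.

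For the matching step itself, $\Phi$ composed with the uniform permutation $\sigma$ is a deterministic function taking as input the multiset of neighbor availabilities and returning the availability $a_t(J^-)$ and its number of edges to $J^+$; again, a function only of the data above together with fresh randomness. Reading off (\ref{eq:dynA}) then yields the identity
\[
\mu^+_{t+1} = \mu^+_t - \delta_{a_t(J^+)} - \sum_{l=1}^\ell \delta_{a_t(J^+_l)} + \sum_{l=1}^\ell \delta_{a_t(J^+_l) - E^+_l},
\]
and symmetrically for $\mu^-_{t+1}$, with the isolated-node branches (Steps 1b/2b) handled separately, producing the simpler update $\mu^+_{t+1} = \mu^+_t - \delta_0$ and $\mu^-_{t+1} = \mu^-_t$. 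Chaining these observations gives a transition kernel for $(\mu^+_{t+1}, \mu^-_{t+1})$ that depends on $\mathcal F_t$ only through $(\mu^+_t, \mu^-_t)$, which is exactly the Markov property.

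The delicate point — and the one that deserves explicit justification — is the exchangeability claim used at the pairing steps: although the sets $A_t^\pm$ carry the labels $(v^\pm_{\cdot}, a_t(v^\pm_{\cdot}))$, the conditional law of the random outputs of interest must be shown to depend only on the multisets of availabilities, not on the labelling. This is a standard symmetry property of uniform sampling from labelled bunches, which I would formalize by remarking that any relabelling of the undetermined nodes preserving their availabilities leaves the conditional distribution of $(\mu^+_{t+1}, \mu^-_{t+1})$ invariant, because the induced permutation commutes with both the uniform choice of $J^+$ and the uniform pairing procedure.
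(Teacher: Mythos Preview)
Your proposal is correct and follows essentially the same route as the paper: write down the explicit update formulas for $(\mu^+_{t+1},\mu^-_{t+1})$ in the two cases $a_t(J^+)>0$ and $a_t(J^+)=0$, and then argue that every random ingredient entering those formulas (the uniform choice of $J^+$, the uniform pairing draws, the permutation $\sigma$, and the local rule $\Phi$) has a conditional law that is a measurable function of $(\mu^+_t,\mu^-_t)$ together with fresh independent randomness. Your treatment is in fact slightly more careful than the paper's, since you make explicit the exchangeability point---that only the multiset of availabilities matters, not the node labels---which the paper leaves implicit.
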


\begin{proof} Fix $t\in\llbracket 0,n \rrbracket$. Between times $t$ and $t+1$, 
the measures $\mu^+_t$ and $\mu^-_t$ are updated as follows: 
\begin{itemize}
\item If the drawn node $J^+$ on the left-hand side has a positive availability (Steps 1a) and then 2a) in the above procedure), then we first delete from $\mu^+_t$ the atom corresponding to the newly matched node $J^+$, 
and from $\mu^-_t$, the one corresponding to its match $J^-$ on the right-hand side.   
Then, $\mu^+_{t+1}$ is obtained by updating the availabilities of the unmatched nodes $J^+_1,...,J^+_\ell$ connected to $J^-$. We obtain that 
     \begin{equation}
    \label{eq:dynmu1}
    \left\{\begin{array}{ll}
    \mu_{t+1}^+ &= \mu_t^+ - \delta_{a_t(J^+)} + \displaystyle\sum_{l=1}^\ell \left(\delta_{a_t\left(J^+_l\right) - E^+_l} - \delta_{a_t\left(J^+_l\right)}\right);\\
    \mu_{t+1}^- &= \mu_t^- - \delta_{a_t(J^-)} + \displaystyle\sum_{l=1,\,l \ne \sigma(m)}^k\left(\delta_{a_t\left(J^-_l\right) - E^-_{l}} - \delta_{a_t\left(J^-_{l}\right)}\right),
    \end{array}\right.
    \end{equation}
    where sums over empty sets are understood as null. 
\item Now, if $J^+$ has a zero availability (Steps 1b) and 2b) in the above procedure), then this node switches from the set $U_t$ to the set $I_{t+1}$, and so the corresponding atom is deleted from the measure $\mu^+_t$. Thus we get 
\begin{equation}
    \label{eq:dynmu2}
    \left\{\begin{array}{ll}
    \mu_{t+1}^+ &= \mu_t^+ - \delta_{a_t(J^+)}\,;\\\\
    \mu_{t+1}^- &= \mu_t^-\,.
    \end{array}\right.
    \end{equation}
\end{itemize}
    In the above recurrence equations, the choice of $J^+$ is a measurable function of $\mu^+_t$ and of draws that are independent of $\sigma\left((\mu^+_s,\mu^+_s),\,s\le t\right)$. Also, in (\ref{eq:dynmu1}), from the assumption that the matching criterion is local, the choice of $J^-$ is also 
a measurable function of the availabilities of the nodes of the right-hand side, i.e., of $\mu^-_t$, together with a draw that is independent of $\sigma\left((\mu^+_s,\mu^+_s),\,s\le t\right)$, see (\ref{eq:defphi}). All the same, from the very construction of the uniform pairing procedure, the values of $J^+_1,...,J^+_\ell$, $E^+_1,...E^+_\ell$, $J^-_l,\, l\in \llbracket 1,k \rrbracket\setminus\{\sigma(m)\}$ and $E^-_l,\, l\in \llbracket 1,k \rrbracket\setminus\{\sigma(m)\}$  are measurable functions of $\mu_t^+,\mu_t^-$, and draws that are independent of $\sigma\left((\mu^+_s,\mu^+_s),\,s\le t\right)$. Therefore, (\ref{eq:dynmu1}-\ref{eq:dynmu2}) defines a proper Markov transition.
\end{proof}

As we have just proven, the measure-valued representation is Markov in the second construction, 
not in the first one. 
However, as the following result demonstrates, there is a close connexion between the two processes,

\begin{theorem}
\label{thm:coupling}
Let $n$ be a positive integer, and $\tilde G$ be a balanced bipartite graph of size $2n$, and let 
$\mathbf d^+$ and $\mathbf d^-$ be the respective 
degree distributions of the left-hand and right-hand side.  
Let $\procn{(\tilde \mu^+_t,\tilde \mu^-_t)}$ and $\procn{(\mu^+_t,\,\mu^-_t)}$ be the two measure-valued stochastic processes, defined respectively by (\ref{eq:defmutilde}) and (\ref{eq:defmu}), having common initial values 
\begin{equation*} 
\tilde \mu_0^+ =\mu_0^+=\sum_{i=1}^n \delta_{d^+(i)}\quad \mbox{ and }\quad 
\tilde \mu_0^- =\mu_0^-=\sum_{i=1}^n \delta_{d^-(i)}.
\end{equation*}
Let $G$ be the resulting multigraph of the second construction. 
Then, for any $t\in\llbracket 0,n \rrbracket$ and any couple of measures $(\nu^+,\nu^-)$, we get that 
\[\pr{(\mu^+_t,\mu^-_t) = (\nu^+,\nu^-) \mid G = \tilde G} = \pr{(\tilde \mu^+_t,\tilde \mu^-_t) = (\nu^+,\nu^-)}.\]
\end{theorem}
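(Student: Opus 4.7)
The strategy I intend to follow is to construct, conditionally on the event $\{G=\tilde G\}$, an explicit coupling between the CM-based process of Section \ref{sec:CM} and the exploration process of Section \ref{sec:explo} run on $\tilde G$, under which the two measure-valued processes are pathwise equal. Since the law of $\procn{(\tilde\mu^+_t,\tilde\mu^-_t)}$ does not depend on the random multigraph $G$, pathwise equality under the coupling will immediately yield the claimed equality of laws. The core algebraic fact on which the coupling rests is the following identity: conditionally on $\{G=\tilde G\}$, the availability $a_t(v)$ of any undetermined $v\in U_t$ equals the degree $d_t(v)$ of $v$ in the remaining graph $\tilde G_t$ of Section \ref{sec:explo}. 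Indeed, $a_t(v)$ is by construction the number of half-edges of $v$ not yet paired at time $t$; the paired ones correspond to the edges of $v$ in $\tilde G$ whose other endpoint has already joined $M_t\cup I_t$, so $a_t(v)$ counts precisely the edges of $v$ in $\tilde G$ whose other endpoint still lies in $U_t$, that is, $d_t(v)$ in the induced subgraph $\tilde G_t$.

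The coupling is then built by induction on $t\in\llbracket 0,n \rrbracket$. The base case is trivial since $U^\pm_0=\tilde U^\pm_0=V^\pm$ and $a_0(v)=d_0(v)=d(v)$. For the inductive step, assume that $U^\pm_s=\tilde U^\pm_s$, $M^\pm_s=\tilde M^\pm_s$, $I^\pm_s=\tilde I^\pm_s$ and $a_s=d_s$ on undetermined nodes for all $s\le t$. In Step 1 of Section \ref{sec:CM}, $J^+$ is drawn uniformly on $U^+_t$; by the independence property of the uniform half-edge pairing (see \cite{WCours} and the proof of the Proposition above), conditionally on the history the remaining pairing is still uniform, so $\P(G=\tilde G\mid \mbox{history},\,J^+=v)$ does not depend on $v\in U^+_t$, and uniformity of $J^+$ persists after conditioning on $\{G=\tilde G\}$. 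I then couple $J^+=\tilde J^+$. Conditional on $\{G=\tilde G\}$, the neighbors of $J^+$ revealed in Step 1a are precisely the undetermined neighbors of $J^+$ in $\tilde G$, i.e.\ the set $N_t(\tilde J^+)$ of Section \ref{sec:explo}; since $\Phi$ is a deterministic function of the availabilities and of a uniform permutation $\sigma$, coupling $\sigma$ with the one used in Step $\tilde 2$a and invoking $a_t=d_t$ on these neighbors yields $J^-=\tilde J^-$. The updates \reff{eq:dynA} then decrement the availabilities of the remaining undetermined neighbors of $J^+$ and $J^-$ by the exact number of their edges to these two nodes in $\tilde G$, which is precisely the change of degree when passing from $\tilde G_t$ to $\tilde G_{t+1}$. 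The empty-neighborhood branches (Steps 1b and 2b) are handled identically, so the induction propagates and the coupling is complete.

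The main technical obstacle I expect to be slightly delicate is the rigorous justification of the invocation of the Configuration Model's independence property: one must check that conditioning on $\{G=\tilde G\}$ does not bias the uniform draws (of $J^+$ and of the permutations $\sigma$) used by the exploration, which in essence amounts to saying that the order in which half-edges are paired is irrelevant for the marginal law of $G$. Once this is granted, the coupling yields pathwise identity $(\mu^+_t,\mu^-_t)=(\tilde\mu^+_t,\tilde\mu^-_t)$ on $\{G=\tilde G\}$, from which the stated equality of conditional laws follows.
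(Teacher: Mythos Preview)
Your proposal is correct and follows essentially the same route as the paper: both argue by induction on $t$ that, on the event $\{G=\tilde G\}$, one can couple the uniform draws of $J^+$ and of the permutation $\sigma$ so that $U_t=\tilde U_t$ and $a_t(v)=d_t(v)$ for all undetermined $v$, which forces pathwise equality of the measure-valued processes. If anything, you are slightly more explicit than the paper about the one delicate point, namely that conditioning on $\{G=\tilde G\}$ does not destroy the uniformity of $J^+$ and $\sigma$; the paper simply asserts that a common uniform realization can be used, whereas you correctly trace this back to the independence property of the configuration model.
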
   

\begin{proof}
Suppose that $G = \tilde G$, that is, the second construction eventually produces the graph 
$\tilde G$. We index the nodes of $V$ consistently in the two constructions.The initial couples of measures $(\tilde \mu_0^+,\tilde \mu_0^-)$ and $(\mu_0^+,\mu_0^-)$ coincide by assumption, and we then proceed by induction on $t$. 

Suppose that, at some time $t\in \llbracket 0,n-1 \rrbracket$ 
%there exists a coupling between 
%$(\tilde \mu_t^+,\tilde \mu_t^-)$ and $(\mu_t^+,\mu_t^-)$, that is, a common family of 
%uniform random variables $\mathscr U_t$ such that, if all corresponding uniform draws up to $t$ are based on the same realizations, then 
we have 
%$(\tilde \mu_t^+,\tilde \mu_t^-)=(\mu_t^+,\mu_t^-),$
\begin{equation}
\label{eq:HR}
\tilde U_t=U_t\quad\mbox{ and }\quad a_t(v)=d_t(v)\,\mbox{ for all }\, v\mbox{ in } \tilde U_t=U_t.
\end{equation}
We will construct a coupling such that (\ref{eq:HR}) holds also at time $t+1$.  
%$(\tilde \mu_{t+1}^+,\tilde \mu_{t+1}^-)=(\mu_{t+1}^+,\mu_{t+1}^-)$ and $\tilde U_{t+1)=U_{t+1}$. 
For this, first, as $\tilde U_t=U_t$, at steps $\tilde 1$ and step $1$ respectively, we can set a common realization of a 
uniform draw on $\llbracket 1,|U^+_t| \rrbracket$, leading to the same values for 
$\tilde J^+$ and $J^+$. Then, 
\begin{itemize}
\item If $a_t(J^+)>0$, at step 1a),  as $G= \tilde G$ 
the uniform pairing procedure leads to the same set of neighbors for $J^+$ as in $\tilde G$, namely 
$J^-_1=\tilde J^-_1,...,J^-_k=\tilde J^-_k$. Then, as $d_t(\tilde J^+)=a_t(J^+)>0$, at steps $\tilde 2$a) and 2a) respectively, we can set the same value for 
$\tilde J^-$ and $J^-$ in $\tilde U^-_t=\tilde U^-_t$, using the common rule $\Phi$ and the the same uniform draw for the permutation $\sigma$. Finally, as $G= \tilde G$ the uniform pairing procedure leads to the same set of neighbors for $J^-$ (aside from $J^+$) as in $\tilde G$, namely 
$J^+_1=\tilde J^+_1,...,J^+_k=\tilde J^+_\ell$. We then obtain that 
\[\tilde U_{t+1}=\tilde U_t\setminus\{\tilde J^+,\tilde J^-\}=U_t\setminus\{J^+,J^-\}=U_{t+1}.\]
Second, as $G$ is a graph we get that $k=d_t(\tilde J^+)=a_t(J^+)$ and $\ell=d_t(\tilde J^-)=a_t(J^-)$. Moreover, for any 
$l\in\llbracket 1,k \rrbracket\setminus\{\sigma(m)\}$, at step 1a) we obtain that 
\[a_{t+1}(J^-_l)=a_t(J^-_l)-E^-_l=a_t(J^-_l)-1= d_t(J^-_l)-1 = a_{t+1}(J^-_l).\]
All the same, 
for any 
$l\in\llbracket 1,\ell \rrbracket\setminus\{\sigma(m)\}$, at step 2a) we obtain that 
\[a_{t+1}(J^+_l)=a_t(J^+_l)-E^+_l=a_t(J^+_l)-1= d_t(J^+_l)-1 = a_{t+1}(J^+_l).\]
Last, for any $v\in U_{t+1} \setminus \{J^+_1,...,J^+_\ell,J^-_1,...,J^-_k\}$ we get that 
\[a_{t+1}(v)=a_t(v)=d_t(v)=d_{t+1}(v).\]
\item If $a_t(J^+)=0$, then we obtain that 
\[\tilde U_{t+1}=\tilde U_t\setminus\{\tilde J^+\}=U_t\setminus\{J^+\}=U_{t+1}\]
and for all $v\in \tilde U_{t+1}=U_{t+1}$, 
\[a_{t+1}(v)=a_t(v)=d_t(v)=d_{t+1}(v).\]
\end{itemize}
In all cases, (\ref{eq:HR}) holds at step $t+1$. 

To conclude, we can then construct a coupling such that (\ref{eq:HR}) holds for all 
$t\in\llbracket 0,n \rrbracket$. From the very definitions (\ref{eq:defmutilde}) and (\ref{eq:defmu}), 
this implies in particular that $(\tilde \mu_{t}^+,\tilde \mu_{t}^-)=(\mu_{t}^+,\mu_{t}^-)$ for all 
$t\in\llbracket 0,n \rrbracket$. This concludes the proof. \end{proof}

%Having our coupling result, Theorem \ref{thm:coupling}, in hand, 
We are now in a position to estimate the matching coverage of local algorithms on a given bipartite graph, 
as the size $2n$ of the latter grows to infinity, using an approximation of the Markov chain $\procn{(\mu_t^+,\mu^-_t)}$. 
Our strategy is as follows: first, using a procedure that is closely related to the
{Differential Equation Method} of \cite{WCours}, we can exploit the Markov property, Proposition \ref{prop:Markov}, 
to derive a large-graph approximation of the (suitably scaled) Markov chain 
$\procn{(\mu_t^+,\mu^-_t)}$ as the solution of an ODE, which we introduce in Section \ref{sec:fluid}.  
%This deterministic approximation of the Markov chain can be seen as its ``mean behavior" to the large graph limit. 
Then, we use the relations (\ref{eq:ratio1}) and (\ref{eq:ratio2}), together with 
our coupling result, Theorem \ref{thm:coupling}, to approximate the matching coverage for a given graph $G$ and a given matching criterion $\Phi$, by an explicit function 
of the measure-valued solution of the ODE under consideration - see (\ref{Mconst}) below. 
As examples, we then study the two particular cases of local algorithms, 
{\sc greedy} and {\sc minres}, introduced in Definition \ref{def:algo}.

\section{Hydrodynamic approximation}
\label{sec:fluid}
In this section, we introduce our hydrodynamic approximation of the construction of Section \ref{sec:CM}, namely, a system of measure-valued ODE that we will use as an approximation of the behavior of the process 
$\procn{(\mu_t^+,\mu^-_t)}$.

\begin{definition}[Hydrodynamic approximation]
Fix a local matching algorithm $\Phi$. 
Let for all $\varphi:\R\to\R$, $\mu^+,\,\mu^-\in\M(\N)$, $k\in\N$ and $t\ge 0$, 
\[
F^\Phi_{\varphi,\mu^-}(k)=\esp{\varphi\left(a_t(J^-)-1\right)\,|\,a_t(J^+)=k,\,(\mu^+_t,\mu^-_t)=(\mu^+,\mu^-)}\mathbf 1_{\N^*}(k). 
\]
%With all the tools in hand, we state without formal proof that for any acceptable algorithm the hydrodynamic limit is written as a solution of the following system:
%\begin{equation}
%d\cro{\bar{\mu}_s,\varphi} = \Psi(\bar{\mu}_s) ds
%\label{eq:sys}
%\end{equation}
%or more explicitly,
A {hydrodynamic approximation} of the algorithm is a solution 
$\procu{(\barmul,\barmur)}$ in $\mathrm D\left([0,1],(\M(\N))^2\right)$, of the following family of systems of ODE's: 
for all $s\in[0,1]$, 
\begin{equation}
\label{eq:hydro}
\left\{\begin{array}{ll}
\displaystyle{\d\cro{\barmul_s,\varphi} \over \d s} &= \displaystyle-{\cro{\barmul_s,\varphi} \over \cro{\barmul_s,\mathbf 1}}
-{\cro{\barmul_s,F^\Phi_{X,\barmur_s}} \over \cro{\barmul_s,\mathbf 1}}{\cro{\barmul_s,X\Delta\varphi} \over \cro{\barmul_s,X}} \\
\displaystyle{\d\cro{\barmur_s,\varphi} \over \d s} &= \displaystyle-{\cro{\barmul_s,F^\Phi_{\varphi,\barmur_s}} \over \cro{\barmul_s,\mathbf 1}}
-{\cro{\barmul_s,X} \over \cro{\barmul_s,\mathbf 1}}{\cro{\barmur_s,X\Delta\varphi} \over \cro{\barmur_s,X}}
\end{array}\right.,\, \varphi \in \mathcal B_b(\N)\cup\{X,X^2\}.\end{equation}
\end{definition}

\subsection{Heuristics: a large-graph limit} 
\label{subsec:heuri}
Properly showing the convergence of the sequence of CTMC's $\procn{(\mu_t^+,\mu^-_t)}$, properly 
scaled, to a solution of (\ref{eq:hydro}), lies beyond the scope of this paper - and so does the proof of existence and uniqueness of the latter. 
However, let us provide heuristic arguments to intuitively justify the form of the right-hand side of (\ref{eq:hydro}) as an approximation of the dynamics of our construction. To do so, we study for all $\varphi:\R \to \R$, the drift of the process $\procn{\cro{\mu_t^{+},\varphi}}$, the drift of $\procn{\cro{\mu_t^{+},\varphi}}$ can be addressed similarly.
Fix $\varphi:\R \to \R$, two measures $\mu^+$ and $\mu^-$ and $t\in \llbracket 0,n-1 \rrbracket$. 
We address the terms on the right-hand side of (\ref{eq:dynmu1}) separately:  
\begin{itemize}
	\item First, as $J^+$ is drawned uniformly across among all nodes on the `+' side, we easily get that 
\begin{align}
\esp{\varphi\left(a_t(J^+)\right)\,|\,(\mu^+_t,\mu^-_t)=(\mu^+,\mu^-)} &= \sum_{k=0}^n \varphi(k)\pr{a_t(J^+)=k\,|\,(\mu^+_t,\mu^-_t)=(\mu^+,\mu^-)}\nonumber\\
& = \sum_{k=0}^n \varphi(k) \frac{\mu^+(k)}{\cro{\mu^+,\mathbf 1}}
= \frac{\cro{\mu^+,\varphi}}{\cro{\mu^+,\mathbf 1}}\cdot\label{eq:aptbis} \end{align}
\item Now, first observe that as the number of nodes grows large the probability of having multi-edges in the CM vanishes (see e.g. \cite{RDH}). In particular, with overwhelming probability, $J^-$ has $a_t(J^-)-1$ different neighbors which are not $J^+$, so we can make the approximation 
\begin{multline}\label{eq:apt0}
\esp{\sum_{l=1}^\ell\left(\varphi\left(a_t\left(J^+_l\right) - E^+_l\right) - \varphi\left(a_t\left(J^+_l\right)\right)\right)\,|\,(\mu^+_t,\mu^-_t)=(\mu^+,\mu^-)}\\ 
\simeq \esp{\sum_{l=1}^{a_t\left(J^-\right)-1}
\left(\varphi\left(a_t\left(J^+_l\right) - 1\right) - \varphi\left(a_t\left(J^+_l\right)\right)\right)\,|\,(\mu^+_t,\mu^-_t)=(\mu^+,\mu^-)}
.\end{multline}
Second, for a large $n$, independent sampling of half-edges {with} replacement are asymptotically equivalent to the sampling without replacement that are inherent to the procedure of uniform pairing of half-edges. Therefore, it follows from Wald's identity that 
\begin{multline}
\esp{\sum_{l=1}^{a_t\left(J^-\right)-1}\left(\varphi\left(a_t\left(J^+_l\right) - 1\right) - \varphi\left(a_t\left(J^+_l\right)\right)\right)\,|\,(\mu^+_t,\mu^-_t)=(\mu^+,\mu^-)}\\
\shoveleft{=-\esp{\sum_{l=1}^{a_t\left(J^-\right)-1}\Delta\varphi\left(a_t\left(J^+_l\right)\right) \,|\,(\mu^+_t,\mu^-_t)=(\mu^+,\mu^-)}}\\
\simeq -\esp{\Delta\varphi\left(a_t\left(J^+_1\right)\right) \,|\,(\mu^+_t,\mu^-_t)=(\mu^+,\mu^-)}
\esp{a_t\left(J^-\right)-1\,|\,(\mu^+_t,\mu^-_t)=(\mu^+,\mu^-)}. \label{eq:apt1}\end{multline}
But as the choice of half-edges are uniform, 
$a_t(J_1^+)$ follows the size-biased distribution relative to $\mu^+$, so we can write that 
\begin{align}
\esp{\Delta\varphi\left(a_t\left(J^+_1\right)\right)\,|\,\mu^+_t=\mu^+, \,\mu^-_t=\mu^-} 
&= \sum_{a=1}^n \Delta\varphi(a) \pr{a_t\left(J^+_1\right)=a\,|\,\mu^+_t=\mu^+, \,\mu^-_t=\mu^-}\nonumber\\
&= \sum_{a=1}^n \Delta\varphi(a) \frac{k\mu^+(k)}{\cro{\mu^+,X}}={\cro{\mu^+,X\Delta\varphi} \over \cro{\mu^+,X}}\cdot \label{eq:apt2}\end{align}

%\newpage

Also, in view of the above remark we obtain that 
\begin{multline*}
\esp{a_t\left(J^-\right)-1\,|\,(\mu^+_t,\mu^-_t)=(\mu^+,\mu^-)}\\
\begin{aligned}
&=\sum_{k=0}^n \esp{a_t\left(J^-\right)-1\,|\,a_t(J^+)=k,\,(\mu^+_t,\mu^-_t)=(\mu^+,\mu^-)}\pr{a_t(J^+)=k\,|\,(\mu^+_t,\mu^-_t)=(\mu^+,\mu^-)}\\
&=\sum_{k=0}^nF^\Phi_{X,\mu^-}(k)\pr{a_t(J^+)=k\,|\,(\mu^+_t,\mu^-_t)=(\mu^+,\mu^-)}\\
&= \sum_{k=0}^nF^\Phi_{X,\mu^-}(k){\mu^+(k) \over \cro{\mu^+,\mathbf 1}}= \frac{\cro{\mu^+,F^\Phi_{X,\mu^-}}}{\cro{\mu^+,\mathbf 1}}\cdot
\end{aligned}
\end{multline*}
 Gathering this together with (\ref{eq:apt1}-\ref{eq:apt2}) into (\ref{eq:apt0}), we obtain that 
\begin{equation*}%\label{eq:aptter}
\esp{\sum_{l=1}^\ell\left(\varphi\left(a_t\left(J^+_l\right) - E^+_l\right) - \varphi\left(a_t\left(J^+_l\right)\right)\right)\,|\,(\mu^+_t,\mu^-_t)=(\mu^+,\mu^-)}
\simeq -\frac{\cro{\mu^+,F^\Phi_{X,\mu^-}}}{\cro{\mu^+,\mathbf 1}} {\cro{\mu^+,X\Delta\varphi} \over \cro{\mu^+,X}}\cdot\end{equation*}
\end{itemize}
This, together with (\ref{eq:aptbis}) in (\ref{eq:dynmu1}), yields to the following drift approximation: 
%for all $t$ and $\varphi$, 
\begin{equation}
\label{eq:apt}
\esp{\cro{\mu_{t+1}^{+},\varphi} - \cro{\mu_{t}^{+},\varphi}\,|\,(\mu^{+}_t,\mu^{-}_t)=(\mu^+,\mu^-)}
\simeq -\frac{\cro{\mu^+,\varphi}}{\cro{\mu^+,\mathbf 1}} - \frac{\cro{\mu^+,F^\Phi_{X,\mu^-}}}{\cro{\mu^+,\mathbf 1}} {\cro{\mu^+,X\Delta\varphi} \over \cro{\mu^+,X}}\cdot 
\end{equation}

\paragraph{Scaling.} Let us first emphasize the dependence in $n$ of the various parameters for a given graph of size $2n$, by adding a superscript $n$ to all variables involved. In particular, we denote respectively by $\mu_t^{n,+}$ and $\mu_t^{n,-}$, the two measures at time $t$. 
Let us denote for any $(\mu^+,\mu^-)$ and any $G:\M(\N)^2\to \R$ such that the following expectations exist, the drift 
\begin{equation*}
{\mathscr Q}^nG\left(\mu^+,\mu^-\right) :=  \esp{G\left(\mu_{t+1}^{n,+},\mu_{t+1}^{n,+}\right) - G\left(\mu_{t}^{n,+},\mu_{t}^{n,+}\right)\,|\,(\mu^{n,+}_t,\mu^{n,-}_t)=(\mu^+,\mu^-)},\quad t \ge 0.
\end{equation*}
Then, defining for any $\varphi\in\mathcal B_b(\N)\cup\{X,X^2\}$, the mapping 
\[\Pi^1_\varphi:\begin{cases}
\M(\N)^2 &\longrightarrow \R\\
(\mu^+,\mu^-) &\longmapsto \cro{\mu^+,\varphi}
\end{cases}\,,\]
(\ref{eq:apt}) can be rewritten as 
\begin{equation}\label{eq:aptter}
\mathscr Q^n\Pi^1_\varphi\left(\mu^+,\mu^-\right)
%=\esp{\cro{\mu_{t+1}^{n,+},\varphi} - \cro{\mu_{t}^{n,+},\varphi}\,|\,(\mu^{n,+}_t,\mu^{n,-}_t)=(\mu^+,\mu^-)}\\
\simeq \mathscr Q\Pi^1_\varphi\left(\mu^+,\mu^-\right)
:=-\frac{\cro{\mu^+,\varphi}}{\cro{\mu^+,\mathbf 1}} - \frac{\cro{\mu^+,F^\Phi_{X,\mu^-}}}{\cro{\mu^+,\mathbf 1}} {\cro{\mu^+,X\Delta\varphi} \over \cro{\mu^+,X}}\cdot
\end{equation}
Let us define the corresponding scaled, and interpolated, continuous-time processes: for all $t\in[0,1]$, we let 
\begin{equation}
\barmunl_t={1\over n}\mu^{n,+}_{\left\lfloor nt \right\rfloor}\quad\mbox{ and }\quad \barmunr_t={1\over n}\mu^{n,-}_{\left\lfloor nt \right\rfloor}.%\quad s\in \left[0,1\right] .
\label{eq:norm}
\end{equation}
This scaling corresponds to an acceleration in time of factor $n$ (each consecutive steps being $1/n$ units of time apart), compensated by a scaling in space, of the same magnitude, the weight of each atom being divided by $n$. 
As the approximate drift in the right-hand side of (\ref{eq:aptter}) is independent of $n$ and bilinear in $(\mu^+,\mu^-)$, using martingale representations, after scaling and taking $n$ to infinity, it is then standard (again, see the Differential equation method in \cite{WCours}), that any limiting process 
$\procu{\barmu_t}:=\procu{(\barmu^+_t,\barmu^-_t)}$ of $\procu{\barmun_t}:=\procu{(\barmunl_t,\barmunr_t)}$ 
solves the integral equation
\begin{equation*}
\cro{\barmul_t,\varphi}-\cro{\barmul_0,\varphi} = \Pi^1_\varphi(\barmu_t)-\Pi^1_\varphi(\barmu_0)
=\int_0^t \mathscr Q \Pi^1_\varphi\left(\barmu_s\right)\d s,\quad t\in[0,1],\end{equation*}
if a solution does exist. 
Plugging (\ref{eq:aptter}) in the above, and differentiating in $t$, yields to the first equation of (\ref{eq:hydro}) at all $t$ and for all $\varphi$. Retrieving the second equation of (\ref{eq:hydro}) can be done in a similar fashion. 

%Now, let us make explicit the previous functions for the two matching criteria 
%$\Phi=\textsc{Greedy}$ and then $\Phi=\textsc{minres}$, introduced in Definition \ref{def:algo}. 

\subsection{Hydrodynamic approximation for particular local algorithms}
We now make precise the form of the systems of ODEs (\ref{eq:hydro}) for the two particular local algorithms introduced 
in Definition \ref{def:algo}. We address successively $\Phi=\textsc{Greedy}$ and $\Phi=\textsc{Minres}$.

\subsubsection{$\Phi=\textsc{Greedy}$.} For any $(\mu^+,\mu^-)\in\M(\N)^2$, $t\in [0,1]$ and any $k\in\N^*$, given that 
$\mu^+_t=\mu^+$, $\mu^-_t=\mu^-$ and that the degree of $J^+$ is $k>0$, by the very definition of the uniform pairing procedure, the match $J^-$ of $J^+$ is just determined by a uniform draw of one of its half-edge. Thus its availability $a_t(J^-)$ at $t$ follows the 
size-biased distribution associated to $\mu^-$, namely we get that for all $a\in\N$, 
\[\pr{a_t(J^-)=a \,|\, a_t(J^+)=k,\,(\mu^+_t,\mu^-_t)=(\mu^+,\mu^-)}={a\mu^-(a) \over \cro{\mu^-,X}}\cdot\]
Consequently, we get that for all $k\in\N$ and $\varphi\in\mathcal B_b(\N)\cup\{X,X^2\}$, 
\begin{align*}
F^{\scriptsize{\textsc{greedy}}}_{\varphi,\mu^-}(k)&=\esp{\varphi\left(a_t(J^-)-1\right)\,|\,K^+=k,\,\mu^-_t=\mu^-}\\
								       &= \sum_{a\in\N} \varphi(a-1){a\mu^-(a) \over \cro{\mu^-,X}}\mathbf 1_{\N^*}(k)
								       = {\cro{\mu^-,X\tau_1\varphi} \over \cro{\mu^-,X}}\mathbf 1_{\N^*}(k),
								       \end{align*}
								       where we denote $\tau_1\varphi(x)=\varphi(x-1)$ for all $x$. 
Plugging this into (\ref{eq:hydro}) we obtain that for all $t\le 1$, 
\begin{equation}
\begin{cases}
\displaystyle{\d\cro{\barmul_s,\varphi} \over \d s} &= \displaystyle-{\cro{\barmul_s,\varphi} \over \cro{\barmul_s,\mathbf 1}}
-{\cro{\barmul_s,\mathbf 1_{\N^*}} \over \cro{\barmul_s,\mathbf 1}}{\cro{\barmur_s,X^2-X} \over \cro{\barmur_s,X}}{\cro{\barmul_s,X\Delta\varphi} \over \cro{\barmul_s,X}};\\\\
\displaystyle{\d\cro{\barmur_s,\varphi} \over \d s} &=\displaystyle -{\cro{\barmur_s,X\tau_1\varphi} \over \cro{\barmur_s,X}}{\cro{\barmul_s,\mathbf 1_{\N^*}} \over \cro{\barmul_s,\mathbf 1}}
-{\cro{\barmul_s,X} \over \cro{\barmul_s,\mathbf 1}}{\cro{\barmur_s,X\Delta\varphi} \over \cro{\barmur_s,X}}\\\\
%&= -\left({\cro{\barmur_t,X(\tau_1\varphi +\Delta\varphi)} \over \cro{\barmur_t,X}}{\cro{\barmul_t,\mathbf 1_{\N^*}} \over \cro{\barmul_t,\mathbf 1}}
%+{\cro{\barmul_t,X-\mathbf 1_{\N^*}} \over \cro{\barmul_t,\mathbf 1}}{\cro{\barmur_t,X\Delta\varphi} \over \cro{\barmur_t,X}}\right)\nonumber\\
&= \displaystyle-{\cro{\barmur_s,X\varphi} \over \cro{\barmur_s,X}}{\cro{\barmul_s,\mathbf 1_{\N^*}} \over \cro{\barmul_s,\mathbf 1}}
-{\cro{\barmul_s,X-\mathbf 1_{\N^*}} \over \cro{\barmul_s,\mathbf 1}}{\cro{\barmur_s,X\Delta\varphi} \over \cro{\barmur_s,X}}\cdot \label{eq:hydroG}
\end{cases}
\end{equation}

%\subsection{Hydrodynamic limit for $\Phi=\textsc{Minres}$}
\subsubsection{$\Phi=\textsc{Minres}$.} 
By the very definition of {\sc Minres}, for all $(\mu^+,\mu^-)\in\M(\N)^2$, $t\in [0,1]$ and all $k\in\N^*$, conditional on $a_t(J^+)=k$ and $(\mu^+_t,\mu^-_t)=(\mu^+,\mu^-)$, for all all $a\in \N^*$ $a_t(J^-)\ge a$ means that 
in the uniform pairing procedure, none of the neighbors of $J^+$ are of availability strictly less than $a$. 
Thus, as in the large-graph limit, draws without replacement can be approximated by draws {with} replacement, and at each step, all the neighbors of $J^+$ have with overwhelming probability, a single neighbor on the right-hand side, we get tho the approximation  
\[\pr{a_t(J^-) \ge a \,|\, a_t(J^+)=k,\,(\mu^+_t,\mu^-_t)=(\mu^+,\mu^-)}\simeq \left(\frac{\cro{\mu^-,X\mathbf 1_{[a,+\infty)}}}{\cro{\mu^-,X}}\right)^k.\]
Therefore, we obtain that 
%\pr{a_t(J^-) = a | \mu^-_t=\mu^-,\,K^+=k}\\
\begin{align*}
F^{\scriptsize{\textsc{minres}}}_{\varphi,\mu}(k)= &\esp{\varphi\left(a_t(J^-)-1\right)\,|\, a_t(J^+)=k,\,(\mu^+_t,\mu^-_t)=(\mu^+,\mu^-)}\\
=&\sum_{a\in\N^*} \varphi(a-1)\Biggl(\pr{a_t(J^-) \ge a \,|\, a_t(J^+)=k,\,(\mu^+_t,\mu^-_t)=(\mu^+,\mu^-)}\\
 &\phantom{\sum_{a\in\N} \varphi(a-1)\Biggl(}\quad - \pr{a_t(J^-) \ge a+1 \,|\, a_t(J^+)=k,\,(\mu^+_t,\mu^-_t)=(\mu^+,\mu^-)}\biggl)\\
\simeq&\sum_{a\in\N^*} \varphi(a-1)\left(\frac{\cro{\mu^-,X\mathbf 1_{[a,+\infty)}}^k - \cro{\mu^-,X\mathbf 1_{[a+1,+\infty)}}^k}{\cro{\mu^-,X}^k}\right).\end{align*}
Plugging this into (\ref{eq:hydro}) yields to
\begin{equation}
\begin{cases}
\displaystyle{\d\cro{\barmul_s,\varphi} \over \d s} \simeq&\displaystyle -{\cro{\barmul_s,\varphi} \over \cro{\barmul_s,\mathbf 1}}\\\\
&\displaystyle-{\cro{\barmul_s,X\Delta\varphi} \over \cro{\barmul_s,X}} \sum_{k\in\N} \sum_{a\in\N^*}\left(\frac{\cro{\barmur_s,X\mathbf 1_{[a,+\infty)}}^k - \cro{\barmur_s,X\mathbf 1_{[a+1,+\infty)}}^k}{\cro{\barmur_s,X}^k}\right){(a-1)\barmul_s(k) \over \cro{\barmul_s,\mathbf 1}};%\label{eq:fluidlimit5P}
\\\\
\displaystyle{\d\cro{\barmur_s,\varphi} \over \d s} \simeq& \displaystyle-\sum_{k\in\N} \sum_{a\in\N^*}\left(\frac{\cro{\barmur_s,X\mathbf 1_{[a,+\infty)}}^k 
- \cro{\barmur_s,X\mathbf 1_{[a+1,+\infty)}}^k}{\cro{\barmur_s,X}^k}\right){(a-1)\barmul_s(k) \over \cro{\barmul_t,\mathbf 1}}
\\\\
&\displaystyle-{\cro{\barmul_s,X} \over \cro{\barmul_s,\mathbf 1}}{\cro{\barmur_s,X\Delta\varphi} \over \cro{\barmur_s,X}}\cdot
\end{cases}\label{eq:hydroM}
\end{equation}

\subsection{Approximating the matching coverage}
We are now ready to introduce our estimate of the matching coverage in the large-graph limit. 
Recall, that in the construction of Section \ref{sec:explo}, the isolated nodes at $t=n$ 
on the `-' side are precisely the undetermined nodes having degree zero at the end of the construction. All the same, in the construction of Section \ref{sec:CM} the isolated nodes at $t=n$ on the `-' side are the undetermined nodes of the `-' side having degree zero at the final step, because at completion of the algorithm, all half-edges have been paired. 

Second, by construction there are in both cases, as many isolated nodes on both sides 
of the bipartition. Therefore, we get that 
\[|\tilde I_n| = 2\tilde \mu^-_n(\{0\}) \quad \mbox{ and }\quad |I_n| = 2\mu^-_n(\{0\}).\]
All in all, it follows from (\ref{eq:ratio1}) and (\ref{eq:ratio2}), that the two matching coverages are respectively given by 
\begin{align}
\label{eq:ratio1bis}
\tilde{\mathbf M}^n_\Phi(G)&=1-{\tilde \mu^-_n(\{0\})\over n};\\
{\mathbf {M}}^n_\Phi(\xi^+,\xi^-) &=1-{\mu^-_n(\{0\})\over n}=1-\barmunr_1(\{0\})\cdot\label{eq:ratio2bis}
\end{align}
Finally, in view of the large-graph approximation of Subsection \ref{subsec:heuri} we can approximate the matching coverage of the second construction by 
\begin{align}
\overline{\mathbf M}_\Phi(\xi^+,\xi^-) := 1 -  \barmur_1(\{0\}),
\label{Mconst}
\end{align}
where $\procu{\barmu_t}=\procu{(\barmu^+_t,\barmu^-_t)}$ is a solution to (\ref{eq:hydro}). 
In turn, from our coupling result, Theorem \ref{thm:coupling}, for a fixed graph $G$ the matching coverage 
after performing the first construction on $G$, given by (\ref{eq:ratio1bis}), can be approximated by (\ref{Mconst}) 
for the same initial degree distribution. %, in view of $G$

\section{Simulations}
\label{sec:simu}
In this section, we illustrate and complete the above results by way of simulations. 
In Subsection \ref{subsec:3regular}, we test the empirical convergence of the matching coverage of the construction 
of Section \ref{sec:explo}, given by (\ref{eq:ratio1bis}), under both algorithms {\sc Greedy} and {\sc Minres}, to the approximate matching size predicted by the ODEs through formula (\ref{Mconst}), for $G$ a $3$-regular graph. 
Then, in Subsection \ref{subsec:distrib} we address, first, a comparison of the performances of the two algorithms, 
and second, the influence of the parameters on the matching size, along various degree distributions. 
 Finally, in Subsection \ref{subsec:topo} we study the influence of the topology of the graph under consideration, 
 by comparing the results when performing the algorithm of Section \ref{sec:explo}, and the algorithm of 
 Section \ref{sec:CM}. %xploring a graph (Section 2) with those on the configuration model, the degree distribution (section 3). 
 We conclude with a comparison to the results in \cite{Karp}.

\subsection{Convergence of the Matching Size : a case study on 3-regular graphs}
\label{subsec:3regular}

In this section we use 3-regular graphs to the test the convergence of $\tilde{\mathbf M}^n_\Phi(G)$ to $\overline{\mathbf M}_\Phi(\delta_3,\delta_3)$, for $G$ a $3$-regular graph and the initial degree distribution $\mu_0^+ =\mu_0^- = n\delta_3$, 
for $n$ the number of vertices on both sides. In this case we readily obtain that 
\begin{equation}
\barmul_0 = \delta_3\quad\mbox{ and }\quad \barmur_0 = \delta_3.
\end{equation}
For {\sc Greedy}, specializing the system (\ref{eq:hydroG}) successively to $\varphi=\mathbf 1_{\{0\}}$, 
$\varphi=\mathbf 1_{\{1\}}$, $\varphi=\mathbf 1_{\{2\}}$, $\varphi=\mathbf 1_{3}$, yields to 
%\begin{eqnarray*}
%\barmul_s & = & \sum_{k=0}^3 \barmul_s(k) \delta_k \\
%\barmur_s & = & \sum_{k=0}^3 \barmur_s(k) \delta_k 
%\end{eqnarray*}
%are obtained by solving this ODE system:
%
\begin{equation}\label{eq:hydroG3regular}
\begin{cases}
\displaystyle\frac{\d\barmul_s(0)}{\d s} & =  \displaystyle- \left( \frac{\barmul_s(0)}{m(\barmul_s)} + \frac{m^*(\barmul_s)}{m(\barmul_s)}\left(\frac{Q(\barmur_s)}{E(\barmur_s)} - 1\right)\frac{- \barmul_s(1)}{E(\barmul_s)} \right); \\
\displaystyle\frac{\d\barmul_s(1)}{\d s} &=  \displaystyle- \left( \frac{\barmul_s(1)}{m(\barmul_s)} + \frac{m^*(\barmul_s)}{m(\barmul_s)}\left(\frac{Q(\barmur_s)}{E(\barmur_s)} - 1\right)\frac{\barmul_s(1) - 2\barmul_s(2)}{E(\barmul_s)} \right); \\
\displaystyle\frac{\d\barmul_s(2)}{\d s} &=  \displaystyle- \left( \frac{\barmul_s(2)}{m(\barmul_s)} + \frac{m^*(\barmul_s)}{m(\barmul_s)}\left(\frac{Q(\barmur_s)}{E(\barmur_s)} - 1\right)\frac{2\barmul_s(2) - 3\barmul_s(3)}{E(\barmul_s)} \right); 
\\
\displaystyle\frac{\d\barmul_s(3)}{\d s} &= \displaystyle - \left( \frac{\barmul_s(3)}{m(\barmul_s)} + \frac{m^*(\barmul_s)}{m(\barmul_s)}\left(\frac{Q(\barmur_s)}{E(\barmur_s)} - 1\right)\frac{3\barmul_s(3)}{E(\barmul_s)} \right); \\[3mm]
\displaystyle\frac{\d\barmur_s(0)}{\d s} & =  \displaystyle- \left(\frac{E(\barmul_s)}{m(\barmul_s)} - \frac{m^*(\barmul_s)}{m(\barmul_s)}\right)\frac{- \barmur_s(1)}{E(\barmur_s)};  \\
\displaystyle\frac{\d\barmur_s(1)}{\d s} & = \displaystyle - \left( \frac{\barmur_s(1)}{E(\barmur_s)}\frac{m^*(\barmul_s)}{m(\barmul_s)} + \left(\frac{E(\barmul_s)}{m(\barmul_s)} - \frac{m^*(\barmul_s)}{m(\barmul_s)}\right)\frac{\barmur_s(1) - 2\barmur_s(2)}{E(\barmur_s)} \right); \\
\displaystyle\frac{\d\barmur_s(2)}{\d s} & =  \displaystyle-\left( \frac{2\barmur_s(2)}{E(\barmur_s)}\frac{m^*(\barmul_s)}{m(\barmul_s)} + \left(\frac{E(\barmul_s)}{m(\barmul_s)} - \frac{m^*(\barmul_s)}{m(\barmul_s)}\right)\frac{2\barmur_s(2) - 3\barmur_s(3)}{E(\barmur_s)} \right);\\
\displaystyle\frac{\d\barmur_s(3)}{\d s} & =  -\displaystyle \left( \frac{3\barmur_s(3)}{E(\barmur_s)}\frac{m^*(\barmul_s)}{m(\barmul_s)} + \left(\frac{E(\barmul_s)}{m(\barmul_s)} - \frac{m^*(\barmul_s)}{m(\barmul_s)}\right)\frac{ 3\barmur_s(3)}{E(\barmur_s)} \right),\quad s \in [0,1], 
\end{cases}
\end{equation}
where we denote for all $\mu\in\M(\N)$, 
\begin{equation*}\begin{cases}
m(\mu) &= \cro{\mu,\mathbf 1} = \mu(0) +\mu(1) + \mu(2) + \mu(3);\\
m^*(\mu) & =  \mu(1) + \mu(2) + \mu(3); \\
E(\mu) &= \cro{\mu,X} = \mu(1) + 2\mu(2) + 3\mu(3);\\
Q(\mu) &= \cro{\mu,X^2} = \mu(1) + 4\mu(2) + 9\mu(3).
\end{cases}
\end{equation*}

\begin{figure}[H]
\begin{subfigure}{.5\textwidth}
  \centering

	\includegraphics[width=1\linewidth]{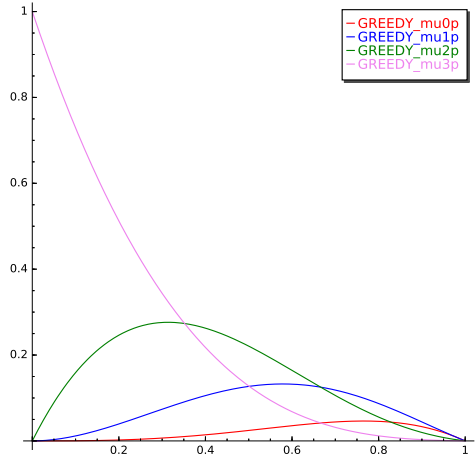}
  \caption{$\barmu^{+}_s(k)$ for $k=0,\dots,3$ and $s\in [0,1]$.}
  \label{fig:greedy3p}
\end{subfigure}
\begin{subfigure}{.5\textwidth}
  \centering

\includegraphics[width=1\linewidth]{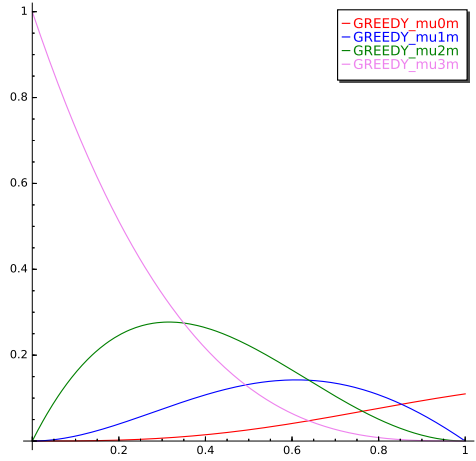}
  \caption{$\barmu^{-}_s(k)$ for $k=0,\dots,3$ and $s\in [0,1]$.}
 \label{fig:greedy3m}
\end{subfigure}

\medskip

\caption{{\sc Greedy}-EDO results for $\barmu^{+}_0=\barmu^{-}_0 = \delta_3$}
\label{fig:greedy3}
\end{figure}

A numerical resolution of the system of ODEs (\ref{eq:hydroG3regular}) is presented in Figure \ref{fig:greedy3}. 
We obtain the final value 
$\barmur_1(0) = 0.1098$ (which is the final value of the red curve in the right-hand curve of Figure \ref{fig:greedy3}). 
From this, we deduce the approximate matching coverage
\begin{equation}\label{eq:ratioG3regular}
\overline{\mathbf M}_{\textsc{Greedy}}(\delta_3,\delta_3) = 1 - \barmur_1(0) = 0.8902.
\end{equation}

Regarding $\textsc{Minres}$, similarly to (\ref{eq:hydroG3regular}) we can obtain a system of ODEs that specializes (\ref{eq:hydroM}) to the case of the $3$-regular degree distribution, which we skip for brevity. From this, we deduce the approximate matching coverage 
\begin{equation}\label{eq:ratioM3regular}
\overline{\mathbf M}_{\textsc{Minres}}(\delta_3,\delta_3) = 1 - \barmur_1(0) = 0.9378.
\end{equation}

%Now we compare this theorical limit with ... (bon je n'\'{e}cris pas bien en anglais, mais on ne peut pas laisser cela comme \c{c}a!)

To illustrate the convergence of $\tilde{\mathbf M}^n_\Phi(G)$ to $\overline{\mathbf M}_\Phi(\delta_3,\delta_3)$ in both cases, we proceed as follows: for each value of $n$ from 10 to 10 000, we implement the bipartite configuration model 
to draw a $3$-regular graph as a realization of the CM. Then, we run each algorithm once and then plot the matching coverage.  
The following figures compile the evolution of the simulated matching coverages as the graph size $n$ grows large.                                                                                                                                                             
\begin{figure}[H]
\allowdisplaybreaks[1]
\centering
	
			\includegraphics[width=0.7\linewidth]{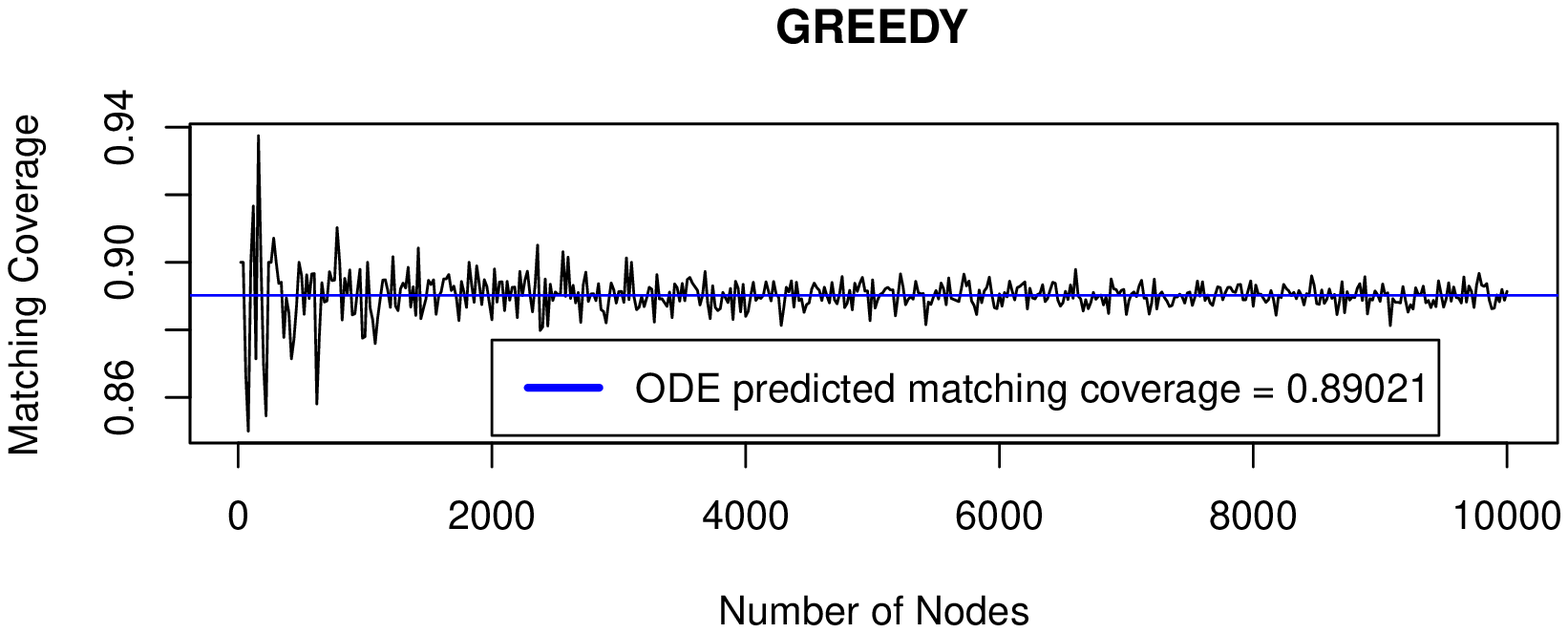}
			\caption{Matching coverage of {\sc Greedy} as the graph size tends to $\infty$}
			%\label{gre3}
\centering
	
			\includegraphics[width=0.7\linewidth]{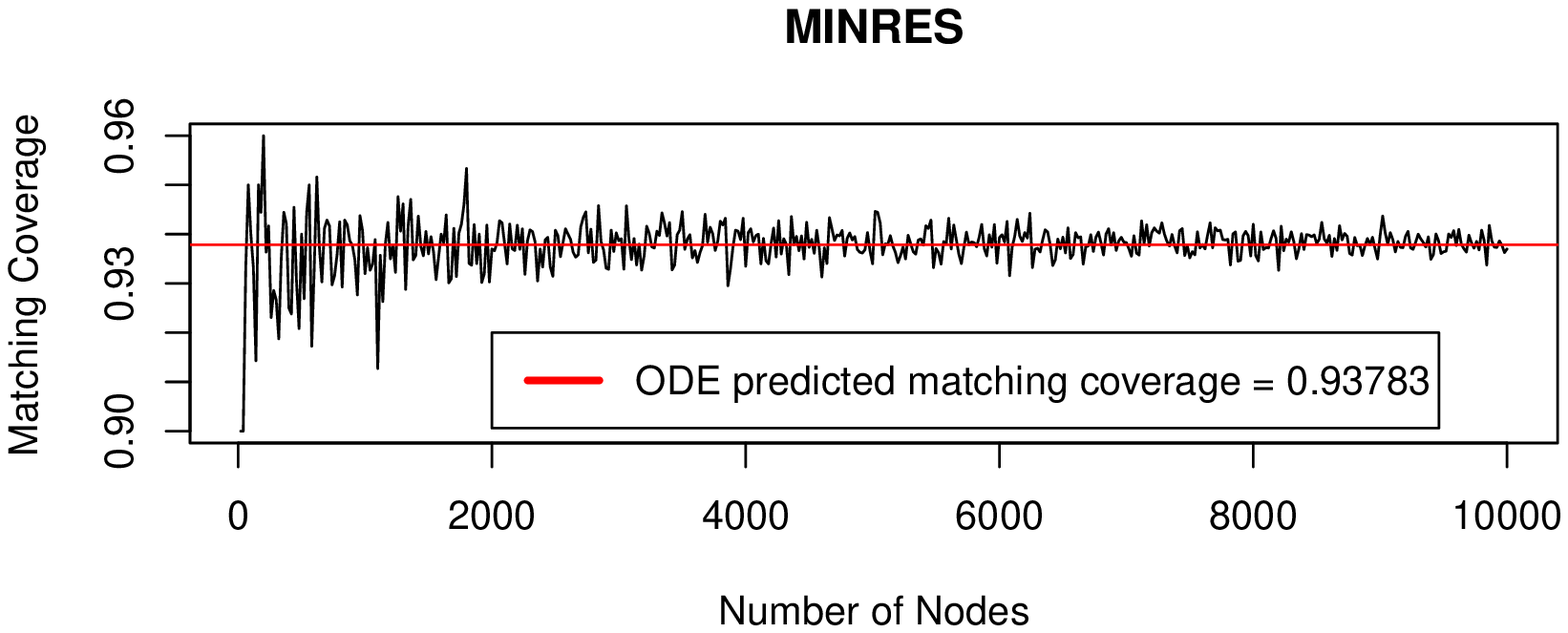}
			\caption{Matching coverage of {\sc Minres} as the graph size tends to $\infty$}
			\label{min3}
\end{figure}

As the graph gets larger, the fluctuations get smaller and smaller, heuristically showing the convergence of the matching coverage $\tilde{\mathbf M}^n_\Phi(G)$ to $\overline{\mathbf M}_\Phi(\delta_3,\delta_3)$, for both algorithms. To complete these results, for various graph sizes we ran $N=50$ iterations of the previous procedure. Means and standard deviations of the corresponding statistical distributions for $\tilde{\mathbf M}^n_\Phi(G)$, are given in Table \ref{tab:AvgConv}

\begin{table}[h!]
	\begin{center}
\begin{tabular}{|c|c|c|c|c|c|c||c|}
		\hline	
   &Graph Size $n$ & 200 & 500 & 1000 & 3000 & 5000 & $\overline{\mathbf M}_\Phi(\delta_3,\delta_3)$ \\ \hline	\hline
   $\tilde{\mathbf M}^n_{\textsc{Greedy}}(G)$ & Mean & 0.8904  & 0.8916  & 0.8911  & 0.8897   & 0.8898 & \textbf{0.8902} \\ \hline	
	& Std Dev &  0.0198  & 0.0109  & 0.009  & 0.0041   & 0.00311 &  \\ \hline
	$\tilde{\mathbf M}^n_{\textsc{Minres}}(G)$ & Mean & 0.9356  & 0.9365  & 0.9396  & 0.9378  & 0.9385 & \textbf{0.9378}   \\ \hline
	&Std Dev &  0.0148  & 0.0096  & 0.0052  & 0.0040   & 0.0025 &   \\ \hline
	
 \end{tabular}
\end{center}
	\caption{Recap. of Average Matching Size}
	\label{tab:AvgConv}
\end{table} 

Table \ref{tab:AvgConv} confirms the results that Figure \ref{min3} preluded. The shrinking of the standards deviation confirms the heuristic convergence to a deterministic value. It also stresses on the better performances of {\sc Minres} with respect to {\sc Greedy} for this particular degree distribution. In the next part, we develop this comparison for a 
      larger range of degree distributions.

%The other trend is the speed at which the convergences occurs. The chart shows it better, but {\sc Minres} seems to be converging faster than {\sc Greedy}. In fact, not only is {\sc Minres} closer to its ODE predicted limit, its standard deviation seems to shrink faster too.
%
%
%\begin{remark}
%We have not mentionned multi-edges. Under assumptions of convergence for the second moment of the initial measure, Angel, Holmgren and Van der Hofstad \cite{angel2017limit} showed that \textit{the number of multi-edges does not scale}. It can be approached by a Poisson Law which parameters depends only on the initial measure $lim (\overline{\mu}_0^+,\overline{\mu}_0^-)$.
%\end{remark}

\subsection{An Array of Degree Distributions}
\label{subsec:distrib}

After the illustration of the convergence to the solution of the system of ODEs in the previous section, in this section 
we study the evolution of the performances of the two algorithms {\sc Greedy} and {\sc Minres},  
along various degree distributions and various parameters. 

Our procedure is the following: for each degree distribution, we implement the bipartite CM to generate a large graph (of size $n=10^4$ nodes), 
in which the degrees of the nodes form a $n$-sample of the prescribed degree distribution, after testing the graphicality of the latter degree distribution (i.e., the feasibility of the generation of the graph). 
%For such large a size, the oscillations of the performances for both algorithms are small enough between runs of our algorithms. 
      %Second, a $10^4$ sample is a good enough representative of each one of the degree distributions we chose. 
      For each distribution, we then ran $50$ iterations of both algorithms, following the first construction of Section \ref{sec:explo}. 
      %Finally, a distribution followed by a term in parentheses means said term is \textsl{the parameter}. For example, Poisson(3) means we are studying a Poisson Law with parameter 3.

\paragraph{Poisson distributions.}
We first address the class of Poisson distributions, which are well know to be the asymptotic degree distributions  
of Erdös-Rényi graphs. 
%\textbf{POISSON LAW} for nodes. We chose it mainly because it is a local node degree limit for several random graph models including the configuration model.
Distributions of the matching coverage for the {\sc Greedy} (respectively, {\sc Minres}) algorithm are given in Figure \ref{fig:grepois} 
(resp., Figure \ref{fig:minpois}), for Poisson distributions of various parameters. 
For comparing the two algorithms, these distributions are gathered in Figure \ref{fig:totpois}.

\begin{figure}[H]
\begin{subfigure}{.5\textwidth}
  \centering

	\includegraphics[width=1\linewidth]{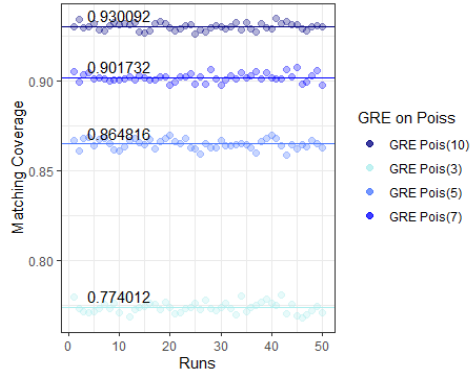}
  \caption{{\sc Greedy} on Poisson Laws}
  \label{fig:grepois}
\end{subfigure}
\begin{subfigure}{.5\textwidth}
  \centering

\includegraphics[width=1\linewidth]{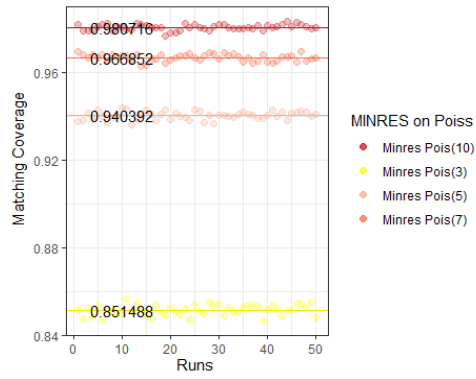}
  \caption{{\sc Minres} on Poisson Laws}
  \label{fig:minpois}
\end{subfigure}

\centering
\begin{subfigure}{1\textwidth}
  \centering
  
	\includegraphics[width=1\linewidth]{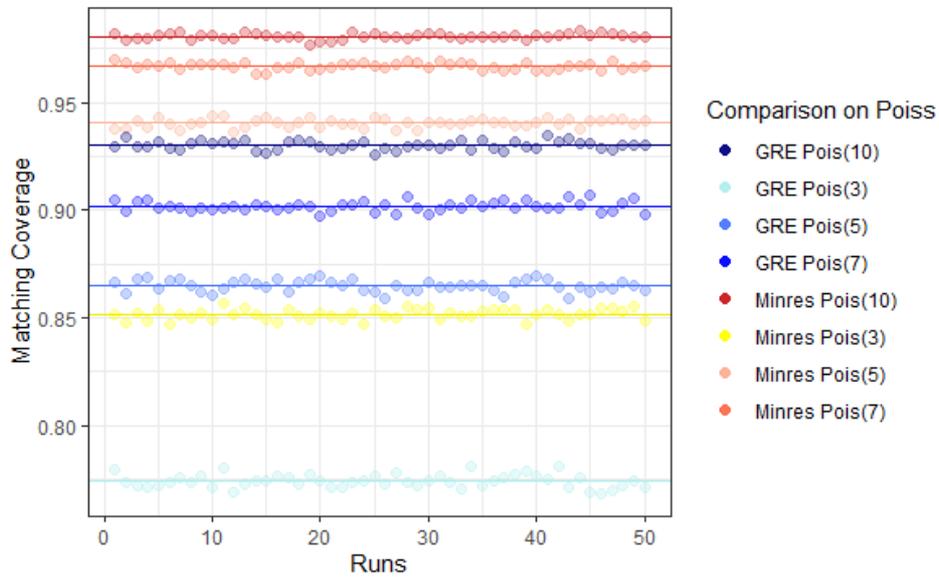}
  \caption{Comparison on Poisson Law}
  \label{fig:totpois}
\end{subfigure}

\caption{Performance on Poisson Degree Distribution}
\label{fig:pois}
\end{figure}

\paragraph{Regular bipartite graphs.}
We now address various degree distributions that correspond to regular bipartite graphs: each node has the same degree, 
in other words we have $\xi^+=\xi^-=\delta_p$, for some $p\in\N^*$, thereby generalizing the study of Sub-section \ref{subsec:3regular}. 
Notice that such bipartite graphs always admit a perfect matching. The distributions of matching coverage are 
given in Figures \ref{fig:grereg}, \ref{fig:minreg} and \ref{fig:totreg}.

\begin{figure}[H]
\begin{subfigure}{.5\textwidth}
  \centering
  
	\includegraphics[width=1\linewidth]{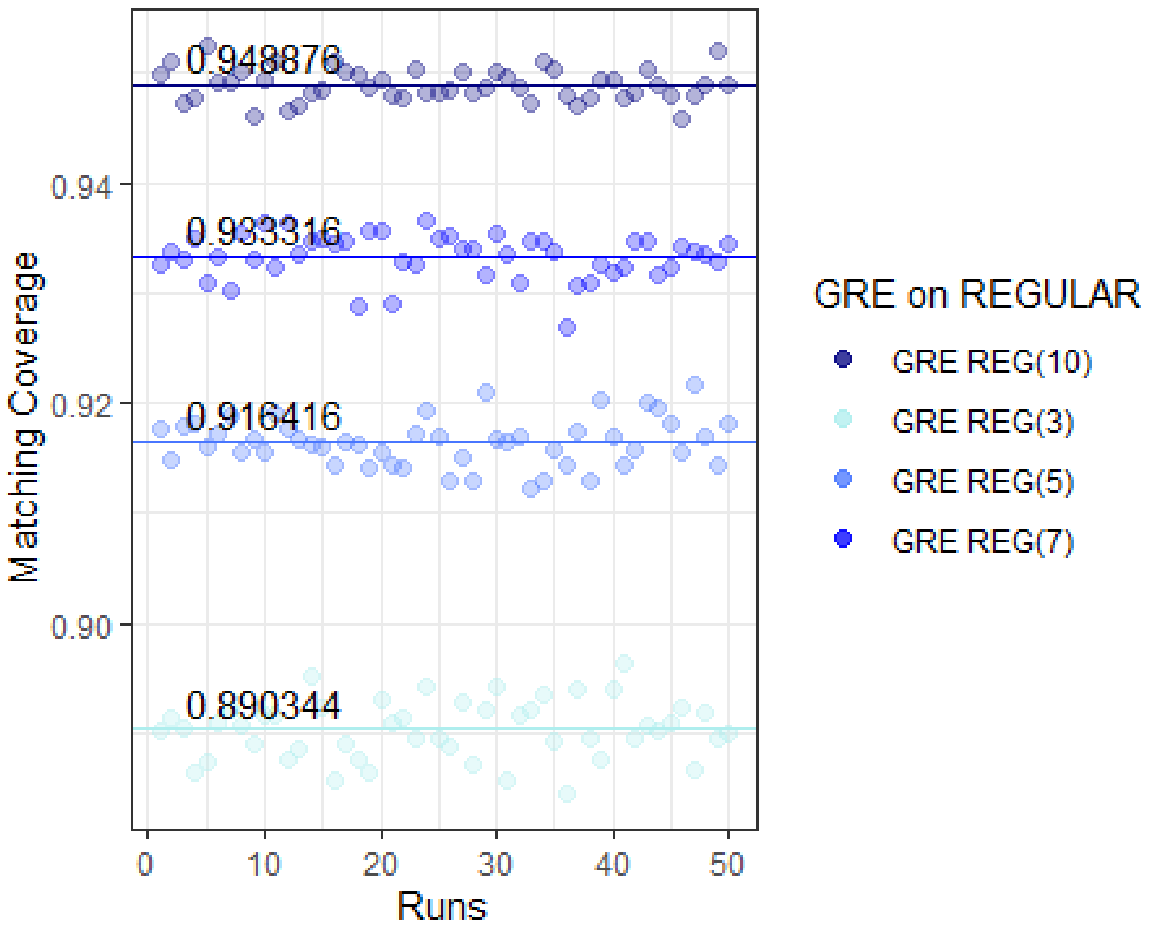}
  \caption{{\sc Greedy} on Regular Degree Distr.}
  \label{fig:grereg}
\end{subfigure}%
\begin{subfigure}{.5\textwidth}
  \centering
 
	\includegraphics[width=1\linewidth]{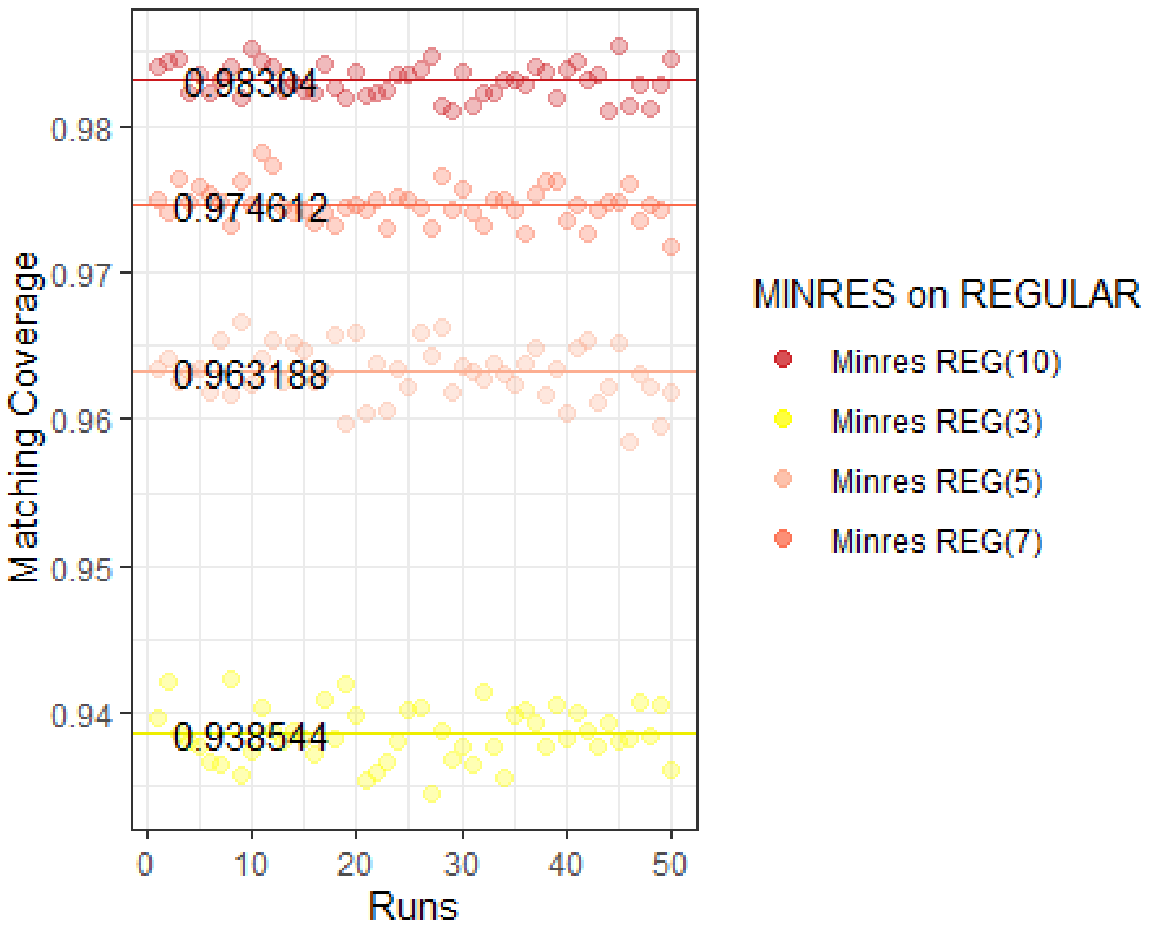}
  \caption{{\sc Minres} on Regular Degree Distr.}
  \label{fig:minreg}
\end{subfigure}
\centering
\begin{subfigure}{1\textwidth}  
\includegraphics[width=0.9\linewidth]{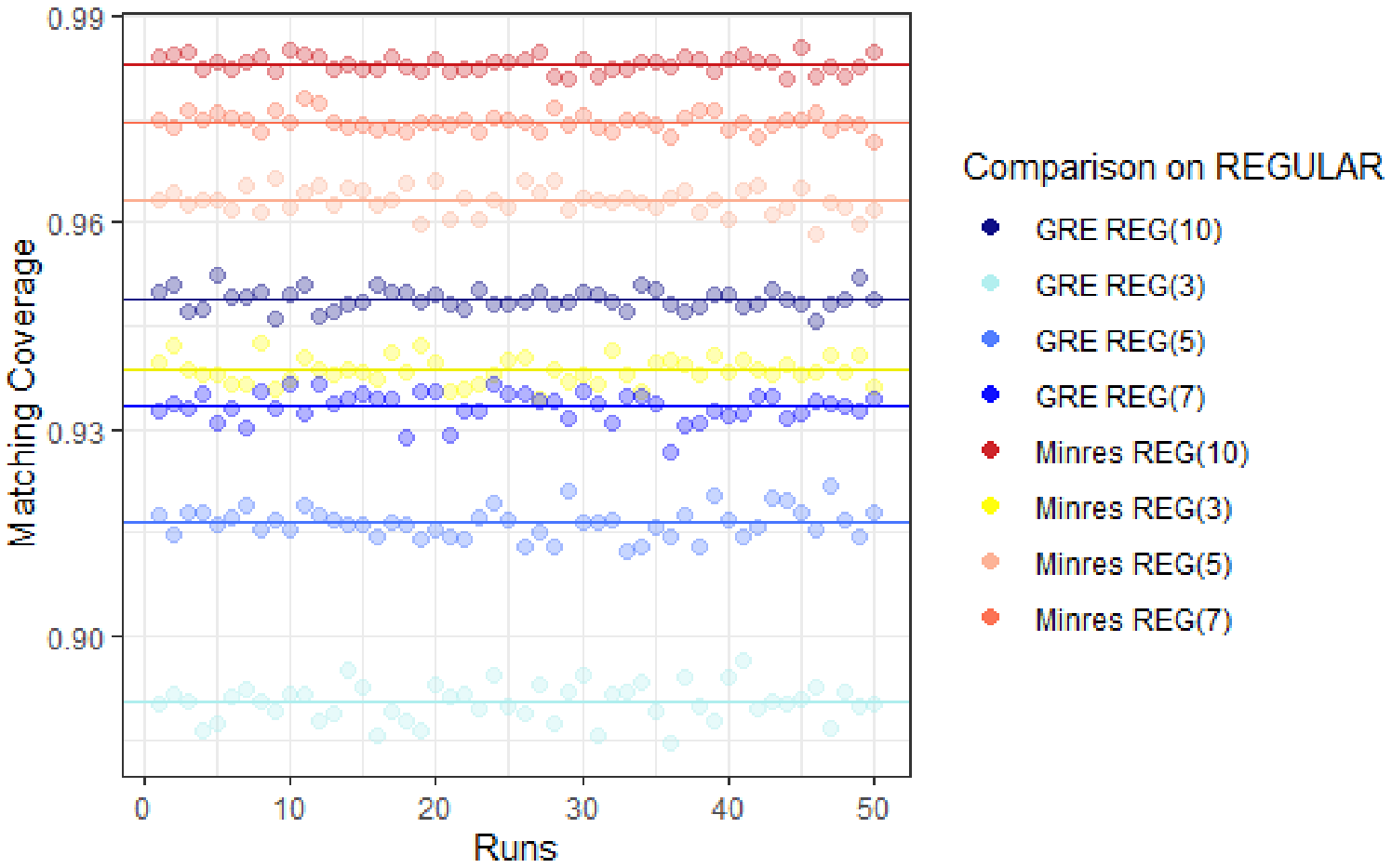}
  \caption{Comparison on Regular Distribution}
  \label{fig:totreg}
	
\end{subfigure}

\caption{Performance on Regular Degree Distr.}
\label{fig:reg}
\end{figure}

These results illustrate well the increase of the performance of both algorithms, as the degree increase. 
For each designated distribution, we can also confirm that the matching coverage of {\sc Minres} is consistently larger than that of 
{\sc Greedy}.  

Comparing the sub-figures of Figure \ref{fig:pois} with the corresponding sub-figures of Figure \ref{fig:reg}, for the same average degree, we make the two  following observations: 
First, both algorithms consistently perform better on regular graphs than on graphs with Poisson degree distributions, and the same mean. 
We conjecture that this phenomenon is due to the variance of Poisson degrees in the first panel:  by restricting \mbox{choice}s, the optimal partners for certain nodes (which we are trying to reach) might get blocked, while regular distributions provide more latitude to chose a match without {missing} an optimal one. Second, for both algorithms the distributions of matching coverage are more spread on regular graphs than on graphs having Poisson degrees. It seems that a uniform initial degree distribution provides more opportunities to deflect from \textit{typical} runs, while the variance of degrees more often restricts the \mbox{choice}s, creating a disparity between nodes.

%Keeping the average degree fixed, we can also see that both algorithms do better on regular graphs. 

\subsection{On the optimal Algorithm of Karp, Vazirani and Vazirani}
\label{subsec:topo}

In \cite{Karp}, Karp, Vazirani and Vazirani present a different approach for online bipartite matchings {on graphs}. The authors define online algorithms as a way of picking the matches of ``girls'' (i.e., nodes on the `+' side) that arrive one by one, based only on the identity of their neighbors on the `boys' side (i.e., the `-' side). By working with the adjacency matrix of the graph, it means that the columns are revealed one by one and the match is processed upon knowing the information of the current column. Our approach based on local algorithms, is a bit different, and actually use more information. Indeed, we also consider the degree of each neighbor of the incoming `girl', and the neighbors of its match of her 'boy' neighbor. %{\sc Greedy} and {\sc Minres} in our case use more information.

Second, the performance metrics considered in \cite{Karp} is the so-called \textit{adversary approach}. It allows to get a lower bound on the performance of online algorithms, as defined above. Namely, Theorem 2 in \cite{Karp} states that in that context, the matching coverage is at least $1-1/e - {o(n)/n}$. This also happens to be the expected matching coverage for the {\sc Random} algorithm defined therein (which is roughly equivalent to our {\sc Greedy}). In the present work, we are interested in a different metrics: we establish convergence to a deterministic value of the average matching ratio, rather than a lower bound. %and then we measure performance by using \textit{the average matching size}.

So it is clear that our framework differ with that of \cite{Karp}. 
However, to gain some insights on how our algorithms based on the degree distribution stand against their counterparts on real graphs, we conducted the following study: we let $G$ be a randomly generated graph with 5000 nodes, from an upper triangular adjacency matrix that is specified as follows: all diagonal elements are 1, thereby insuring the existence of a perfect matching, and all upper elements are Bernoulli($p$) 
random variables, where $p$ is so that the graph has average degree, say, $5$. %(because this is our recurring average degree in most of the simulations). 
Such graphs provide the worst case scenarios for the framework in \cite{Karp}. Then, on the graph $G$ we run the exploration algorithms (Section \ref{sec:explo}) for both {\sc Greedy} and {\sc Minres}. In parallel, we extract  the degree distributions $(\mathbf d^+,\mathbf d^-)$ of the graph under consideration, on which we run both algorithms {\sc Greedy} and {\sc Minres} (joint construction of Section \ref{sec:CM}), thereby constructing another (multi-)graph having the same degree distribution. 
Our results can be summarized in Figure \ref{fig:expl}.

\begin{figure}[H]
\centering
	
			\includegraphics[width=0.8\linewidth]{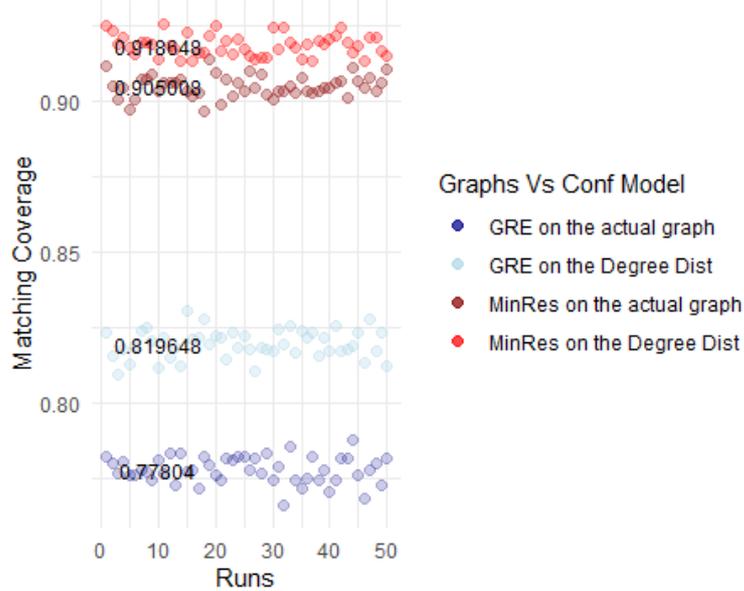}
			\caption{Exploration Vs Conf Model}
			\label{fig:expl}
\end{figure}

Figure \ref{fig:expl} shows that matching algorithms that are jointly constructed with the CM achieve a better matching coverage than 
on this particular graph. In other words, %as the CM provides a uniform draw amongst graphs having the designated degree distribution, 
matching algorithms typically perform badly on this particular graph, with respect to a graph that is obtained as a uniform draw amongst 
graphs having the same degree distribution, unsurprisingly hinting at the influence of the graph topology on the considered matching algorithms. 
Simulations indicate that this influence is enhanced in {\sc Minres} with respect to {\sc Greedy}. Last, 
%Also, the performance of {\sc Greedy} using our framework shows an increase of matching size at the cost of requiring a little bit more information 
we observe again that {\sc Minres} produces a better matching coverage than {\sc Greedy} in all cases.

\subsection{Notes and Conclusion}

In this work, we introduced a procedure to approximate 
the matching coverage of local matching algorithms on graphs, using the hydrodynamic limit, in the large graph asymptotics, 
of a measure-valued Markov process representing the joint construction of the matching together with the graph itself, as a realization of the bipartite configuration model. %summary, our method consisted of extracting algorithms from a local dynamic on graphs. 

Transposing the matching algorithms into the dynamics of the residual degree distributions of the considered graphs, allowed us to predict the results of the considered algorithms, with a remarkable accuracy. 
This results in a dramatic reduction of the problem complexity: as long as one is interested in the matching coverage of the algorithm under consideration, one only needs to keep track of the residual degree distribution, and {\em not} of the whole graph geometry.

As our simulations indicate, two natural and interesting problems arise: 
The first (and probably hardest) one is to quantify the influence of the graph topology 
 on the average matching coverage, for a given algorithm. 
% Second, 
% quantify\textit{how much of an influence} does the graph topology/algorithm have on the difference between runs on the conf model and runs on graph. 
% The last part of our simulations hints at a deeper property but so far no pattern emerged from our data. 
%
%Another question is to try and find a way to quantify the influence of our particular framework (namely a two step configuration model) on the performances of algorithms. How can we get better $\mbox{choice}$ function without making them too complicated? 
The second concerns the optimality of {\sc Minres}. Indeed, among all local algorithms we addressed, 
{\sc Minres} always leads to better results. Is this criterion optimal for the matching coverage, and if so, in what probabilistic sense, and among which class of algorithms? 
Building on these two problems, we believe that the present work opens a promising line of research on the topic. %online matching algorithms. 

\clearpage
\bibliographystyle{plain} 
\bibliography{bibli}
%\nocite{*}

\end{document}